\newtheoremstyle{mytheoremstyle} 
    {5pt}                    
    {5pt}                    
    {\itshape}                   
    {\parindent}                           
    {\bf}                   
    {.}                          
    {.5em}                       
    {}  
\theoremstyle{mytheoremstyle}
\newtheorem{theorem}{Theorem}[section]
\newtheorem{lemma}[theorem]{Lemma}
\newtheorem{corollary}[theorem]{Corollary}
\newtheoremstyle{mytdefintionstyle} 
    {5pt}                    
    {5pt}                    
    {\rm}                   
    {\parindent}                           
    {\bf}                   
    {.}                          
    {.5em}                       
    {}  
\theoremstyle{remark}
\newtheorem{remark}[theorem]{Remark}
\theoremstyle{mytdefintionstyle}
\newtheorem{definition}[theorem]{Definition}
\newtheorem{example}[theorem]{Example}
\newtheorem{computation}[theorem]{Computation}
\newtheorem{construction}[theorem]{Construction}
\newtheoremstyle{exmp_contd} 
{\topsep} {\topsep}%
{\upshape}
{}
{\bfseries}
{}
{ }
{\thmname{#1}\,\thmnumber{ #2}\thmnote{#3}\enspace(continued)}
\theoremstyle{exmp_contd}
\definecolor{ExQ}{HTML}{0000FF}
\definecolor{Dec}{HTML}{E07B00}
\newcommand{\CapPkg}{\textsc{Cap}\xspace}
\newcommand{\EmbAdelFunctor}{\mathrm{Emb}}
\newcommand{\ObjAdel}[5]{ ({#1} \xrightarrow{#2} {{#3}} \xrightarrow{#4} {#5}) }
\newcommand{\EmbAdel}[1]{ \ObjAdel{0}{}{#1}{}{0} }
\newcommand{\MorAdel}[1]{ {   {\{#1\}}} }
\newcommand{\snakecolor}{blue}
\newcommand{\AsCat}{\mathcal{C}}
\newcommand{\Nzero}{\mathbb{Z}_{\geq 0}}
\newcommand{\AC}{\mathbf{A}}
\newcommand{\BC}{\mathbf{B}}
\newcommand{\CC}{\mathbf{C}}
\newcommand{\FC}{\mathbf{F}}
\newcommand{\pmatrow}[2]{ \begin{pmatrix}{#1} & {#2} \end{pmatrix} }
\newcommand{\pmatrowthree}[3]{ \begin{pmatrix}{#1} & {#2} & {#3} \end{pmatrix} }
\newcommand{\pmattwobytwo}[4]{ \begin{pmatrix}{#1} & {#2} \\{#3} & {#4} \end{pmatrix} }
\newcommand{\pmattwobythree}[6]{ \begin{pmatrix}{#1} & {#2} & {#3} \\{#4} & {#5} & {#6} \end{pmatrix} }
\newcommand{\pmatthreebytwo}[6]{ \begin{pmatrix}{#1} & {#2} \\ {#3} &{#4} \\ {#5} & {#6} \end{pmatrix} }
\newcommand{\pmatthreebythree}[9]{ \begin{pmatrix}{#1} & {#2} & {#3} \\{#4} & {#5} & {#6} \\{#7} & {#8} & {#9}\end{pmatrix} }
\newcommand{\pmatcol}[2]{ \begin{pmatrix}{#1} \\ {#2} \end{pmatrix} }
\newcommand{\pmattwobytwos}[4]{ \text{\tiny$\begin{pmatrix}{#1} & {#2} \\{#3} & {#4} \end{pmatrix}$ } }
\newcommand{\Z}{\mathbb{Z}}
\newcommand{\Q}{\mathbb{Q}}
\newcommand{\CH}{\mathrm{H}}
\newcommand{\op}{\mathrm{op}}
\newcommand{\Adel}{\mathrm{Adel}}
\newcommand{\id}{\mathrm{id}}
\DeclareMathOperator{\Aut}{\mathrm{Aut}}
\DeclareMathOperator{\Hom}{\mathrm{Hom}}
\DeclareMathOperator{\kernel}{\mathrm{ker}}
\DeclareMathOperator{\cokernel}{\mathrm{coker}}
\DeclareMathOperator{\image}{\mathrm{im}}
\newcommand{\Modl}{\text{-}\mathrm{Mod}}
\newcommand{\modl}{\text{-}\mathrm{mod}}
\newcommand{\CokernelProjection}{\mathrm{CokernelProjection}}
\tikzset{round left paren/.style={ncbar=0.5cm,out=120,in=-120}}
\tikzset{round right paren/.style={ncbar=0.5cm,out=60,in=-60}}
\newcolumntype{C}[1]{>{\centering\arraybackslash$}p{#1}<{$}}
\newlength{\mycolwd}
\definecolor{lightgray}{gray}{0.8}
\newcolumntype{L}{>{\raggedleft}p{0.28\textwidth}}
\newcolumntype{R}{p{0.8\textwidth}}
\definecolor{ctcolor}{gray}{0.95}
\definecolor{ctucolor}{gray}{0.85}
\newcommand{\thickhline}{%
    \noalign {\ifnum 0=`}\fi \hrule height 1pt
    \futurelet \reserved@a \@xhline
}
\newcolumntype{"}{@{\hskip\tabcolsep\vrule width 1pt\hskip\tabcolsep}}
\newlist{theoremenumerate}{enumerate}{1}
\setlist[theoremenumerate]{label=(\arabic{theoremenumeratei}), ref=\thetheorem.(\arabic{theoremenumeratei}),noitemsep}
\author{Sebastian Posur}
\thanks{This is a contribution to  Project-ID 286237555 – TRR 195 -- by the Deutsche Forschungsgemeinschaft (DFG, German Research Foundation).}
\address{Algebra and Representation Theory, RWTH Aachen University, Pontdriesch 10-16, 52062 Aachen, Germany}
\email{\href{mailto:Sebastian Posur <posur@art.rwth-aachen.de>}{posur@art.rwth-aachen.de}}
\begin{document}

\title[On free abelian categories for theorem proving]{On free abelian categories for theorem proving}

\begin{abstract}
We give a computational approach to theorem proving in homological algebra.
This approach is based on computations in the free abelian category of an additive category $\AC$.
We show that the free abelian category
is amenable to explicit computations whenever we can decide homotopy equations in $\AC$.
As some consequences of our investigations, we recover Dowker's explicit formula for the connecting homomorphism $\partial$
in the snake lemma, we find a universal sense in which $\partial$ is unique, and we give a refined version of the 5-lemma.
\end{abstract}

\keywords{%
}
\subjclass[2010]{%
18E10, 
18E05, 
18A25, 
%
}
\maketitle
\setcounter{tocdepth}{4}
\tableofcontents

Being able to compute explicitly within a universal mathematical object
can be interpreted as theorem proving.
For example, $U \coloneq \Q[a,b]/\langle a^3b + a, b^3a + b, a^2b^2 + 1 \rangle$
as a $\Q$-algebra is universal w.r.t.\ the property that there are two elements $a, b$
satisfying $a^3b + a = b^3a + b = a^2b^2 + 1 = 0$.
Due to Gröbner basis techniques \cite{CLO}, it is easy to compute within $U$ and to see that $U \cong 0$.
This computation in turn can be interpreted as a proof of the simple theorem that any commutative $\Q$-algebra with two elements satisfying the above relations
is already trivial.

A far more sophisticated example of a universal mathematical object is given by the \emph{free abelian category}.
Every additive category $\AC$ admits a universal additive functor into an abelian category
$\AC \xrightarrow{E} \FC$. Here, universal means that if we are given another additive functor
$\AC \xrightarrow{F} \BC$ into an abelian category, then there exists an exact functor (unique up to natural isomorphism)
$\FC \xrightarrow{\widehat{F}} \BC$ such that $\widehat{F} \circ E \simeq F$, i.e., such that the following diagram commutes (up to natural isomorphism):
\begin{center}
  \begin{tikzpicture}[label/.style={postaction={
          decorate,
          decoration={markings, mark=at position .5 with \node #1;}}}, baseline = (A),
          mylabel/.style={thick, draw=none, align=center, minimum width=0.5cm, minimum height=0.5cm,fill=white}]
        \coordinate (r) at (5,0);
        \coordinate (u) at (0,2);
        \node (A) {$\AC$};
        \node (B) at ($(A)+(r)$) {$\FC$};
        \node (C) at ($(A) + (r) - (u)$) {$ \BC $};
        \draw[->,thick] (A) --node[above]{$E$} (B);
        \draw[->,thick] (A) --node[below]{$F$} (C);
        \draw[->,thick, dashed] (B) --node[right]{$\widehat{F}$} (C);
  \end{tikzpicture}    
\end{center}
The abelian category $\FC$ is known as the free abelian category of $\AC$.

The existence of free abelian categories was first proven by Peter Freyd \cite[Theorem 4.1]{FreydRep}.
An easily graspable construction of free abelian categories arises from the theory of finitely presented functors:
a covariant functor from $\AC$ to the category of abelian groups is finitely presented if
it arises as the cokernel of a natural transformation between representable functors.
If $\AC\modl$ denotes the category of finitely presented functors,
then $\FC \simeq (\AC\modl)\modl$. This point of view is fruitfully applied in the context of model theory \cite{PrestPSL},
representation theory \cite{AusFun, Herzog08}, or in the determination of Diophantine sets \cite{HID14}.

Since categories of finitely presented functors are amenable to explicit computations \cite{PosFreyd} via so-called
Freyd categories \cite{BelFredCats},
it is natural to expect that we can perform explicit computations within free abelian categories.
Moreover, as a universal mathematical object, it is natural to expect that computing within free abelian categories
amounts to theorem proving. 
Thus, the goal of this paper is to show 
\begin{enumerate}
  \item how to compute explicitly with free abelian categories,
  \item how to apply this knowledge to theorem proving.
\end{enumerate}

The most explicit construction of free abelian categories is due to Murray Adelman \cite{Adelman}.
In the first section of this paper, we recall his construction $\Adel( \AC )$ and call it the \emph{Adelman category} of $\AC$.
We prove that within a constructive context like Bishop's constructive mathematics (see \cite{MRRConstructiveAlgebra}),
$\Adel( \AC )$ is a computable abelian category if we can decide homotopy equations in $\AC$ (Theorem \ref{theorem:Adelman_computable}).
In particular, this is the case for additive categories generated by acyclic quivers with relations (Example \ref{example:notation}),
which enables us to calculate in free abelian categories associated to several diagrams which occur in the premises of classical
homological lemmata.

In the second section of this paper, we apply the universal property and the computability of Adelman categories to theorem proving.
We do this by providing \emph{universal instances} of classical lemmata.
By a universal instance of a lemma $\mathcal{L}$, we mean an instance of $\mathcal{L}$
with the property that if $\mathcal{L}$ holds for the universal instance, it holds for all instances.

In Subsection \ref{subsection:up_of_adel}, we recall the universal property of the Adelman category.

In Subsection \ref{subsection:universal_snake},
we give the \emph{universal instance of the snake lemma} (Figure \ref{fig:universal_instance_snake}).
We recover an explicit formula of the connecting homomorphism (Remark \ref{remark:concrete_formula}) found by Dowker in \cite{Dow66}.
Moreover, we prove a universal uniqueness property of the connecting homomorphism (Lemma \ref{lemma:universal_uniqueness} and Remark \ref{remark:interpret_universal_uniqueness}).

In Subsection \ref{subsection:five_lemma}, we deal with the 5-lemma.
In contrast to the snake lemma, it is not clear how a universal instance of the 5-lemma can be realized within an Adelman category.
Thus, we first state a refinement of the 5-lemma \ref{lemma:five_refined}
(that actually generalizes the classical 5-lemma),
and afterwards we give the universal instance of this refinement (Lemma \ref{lemma:universal_instace_5lem}).

We end with a conclusion and outlook in Section \ref{section:outlook}.

\section{Computing in free abelian categories}

Throughout this section, $\AC$ denotes an additive category.

\subsection{Adelman categories}

\begin{definition}
  By a \textbf{composable pair} in $\AC$ we mean a pair of morphisms of the form
  \begin{center}
       \begin{tikzpicture}[label/.style={postaction={
            decorate,
            decoration={markings, mark=at position .5 with \node #1;}},
            mylabel/.style={thick, draw=none, align=center, minimum width=0.5cm, minimum height=0.5cm,fill=white}}]
            \coordinate (r) at (3,0);
            \node (A) {$a$};
            \node (B) at ($(A)+(r)$) {$b$};
            \node (C) at ($(B) + (r)$) {$c$.};
            \draw[->,thick] (A) to node[above]{$\alpha$} (B);
            \draw[->,thick] (B) to node[above]{$\beta$}(C);
      \end{tikzpicture}
  \end{center}
  The category of composable pairs is given by
  the functor category $\AC^{\Delta}$,
  where $\Delta$ is given by the diagram $\bullet \rightarrow \bullet \rightarrow \bullet$.
\end{definition}

\begin{remark}\label{remark:homologies}
Although we do \emph{not necessarily} have $\alpha \cdot \beta = 0$, we can nevertheless speak of the \textbf{homology} of a composable pair in the case when $\AC$ is abelian:
\[
 \CH( a \xrightarrow{\alpha} b \xrightarrow{\beta} c ) := 
 \frac{\kernel( \beta )}{\image( \alpha )} := 
 \frac{\kernel( \beta ) + \image( \alpha )}{\image( \alpha )} \simeq \frac{\kernel( \beta )}{\image( \alpha ) \cap \kernel( \beta )}
\]
or diagrammatically
\begin{center}
     \begin{tikzpicture}[label/.style={postaction={
          decorate,
          decoration={markings, mark=at position .5 with \node #1;}},
          mylabel/.style={thick, draw=none, align=center, minimum width=0.5cm, minimum height=0.5cm,fill=white}}]
          \coordinate (r) at (3,0);
          \coordinate (d) at (0,-2);
          \node (A) {$a$};
          \node (B) at ($(A)+(r)$) {$b$};
          \node (C) at ($(B) + (r)$) {$c$};
          \node (ker) at ($(A) + (d) + 0.5*(r)$) {$\kernel( \beta )$};
          \node (coker) at ($(ker) + (r)$) {$\cokernel( \alpha )$};
          \node (H) at ($(B) + 2*(d)$) {$\CH( a \stackrel{\alpha}{\longrightarrow} b \stackrel{\beta}{\longrightarrow} c )$};
          
          \draw[->,thick] (A) to node[above]{$\alpha$} (B);
          \draw[->,thick] (B) to node[above]{$\beta$}(C);
          \draw[right hook->,thick] (ker) to (B);
          \draw[->,thick] (ker) to (coker);
          \draw[->>,thick] (B) to (coker);
          \draw[right hook->,thick] (H) to (coker);
          \draw[->>,thick] (ker) to (H);
    \end{tikzpicture}
\end{center}
In this case, the functoriality of kernels, cokernels, and images implies that $\CH$ defines a functor
\[
  \CH: \AC^{\Delta} \rightarrow \AC.
\]
\end{remark}

The idea of Adelman's construction of free abelian categories
is to formally adjoin homologies in 
the sense of Remark \ref{remark:homologies} to a given additive category $\AC$ (that does not necessarily have kernels or cokernels).

\begin{construction}
 Let $\AC$ be an additive category.
 The \textbf{Adelman category} $\Adel( \AC )$ of $\AC$ is given by the following data:
 \begin{enumerate}
  \item Objects in $\Adel( \AC )$ are composable pairs in $\AC$:
  \begin{center}
     \begin{tikzpicture}[label/.style={postaction={
          decorate,
          decoration={markings, mark=at position .5 with \node #1;}},
          mylabel/.style={thick, draw=none, align=center, minimum width=0.5cm, minimum height=0.5cm,fill=white}}]
          \coordinate (r) at (3,0);
          \node (A) {$(r_a$};
          \node (B) at ($(A)+(r)$) {$a$};
          \node (C) at ($(B) + (r)$) {$c_a)$.};
          \draw[->,thick] (A) to node[above]{$\rho_a$} (B);
          \draw[->,thick] (B) to node[above]{$\gamma_a$}(C);
    \end{tikzpicture}
  \end{center}
  Note that the objects $r_a, c_a$ and morphisms $\rho_a, \gamma_a$ do not formally depend\footnote{We like to write
  objects in $\Adel( \AC )$ in this way in order to stress the interpretation of $\rho_a$ as ``imposing relations'' on $a$ and of $\gamma_a$ as ``imposing corelations'' on $a$.} on the object $a$ in the middle.
  We refer to $\rho_a$ as the \textbf{relation morphism} and to $\gamma_a$ as the \textbf{corelation morphism}.
  Whenever we regard a composable pair as an object in an Adelman category, we will write it in round brackets as depicted above.
  \item A morphism in $\Adel( \AC )$ from
  $\ObjAdel{r_a}{\rho_a}{a}{\gamma_a}{c_a}$
  to
  $\ObjAdel{r_b}{\rho_b}{b}{\gamma_b}{c_b}$
  is given by a morphism $\alpha: a \rightarrow b$ such that there exist morphisms
  $\omega_{\alpha}: r_a \longrightarrow r_b$ and $\psi_{\alpha}: c_a \longrightarrow c_b$
  such that the diagram 
  \begin{center}
     \begin{tikzpicture}[label/.style={postaction={
          decorate,
          decoration={markings, mark=at position .5 with \node #1;}},
          mylabel/.style={thick, draw=none, align=center, minimum width=0.5cm, minimum height=0.5cm,fill=white}}]
          \coordinate (r) at (3,0);
          \coordinate (d) at (0,-2);
          \node (A) {$r_a$};
          \node (B) at ($(A)+(r)$) {$a$};
          \node (C) at ($(B) + (r)$) {$c_a$};
          \node (A2) at ($(A)+(d)$) {$r_{b}$};
          \node (B2) at ($(B)+(d)$) {${b}$};
          \node (C2) at ($(C)+(d)$) {$c_{b}$};
          \draw[->,thick] (A) to node[above]{$\rho_a$} (B);
          \draw[->,thick] (B) to node[above]{$\gamma_a$}(C);
          \draw[->,thick] (A2) to node[above]{$\rho_b$} (B2);
          \draw[->,thick] (B2) to node[above]{$\gamma_b$}(C2);
          \draw[->,thick] (B) to node[left]{$\alpha$}(B2);
          \draw[->,thick,dotted] (A) to node[left]{$\omega_{\alpha}$}(A2);
          \draw[->,thick,dotted] (C) to node[right]{$\psi_{\alpha}$}(C2);
    \end{tikzpicture}
  \end{center}
  commutes. We denote such a morphism by $\{ \omega_{\alpha}, \alpha, \psi_{\alpha} \}$ or simply $\MorAdel{\alpha}$ and refer to 
  $\alpha$ as the \textbf{morphism datum}, to
  $\omega_{\alpha}$ as a \textbf{relation witness}\footnote{We will depict morphisms that we call witnesses by dotted arrows within diagrams.}
  and to $\psi_{\alpha}$ as a \textbf{corelation witness}.
  Moreover, we impose the following equivalence relation: we define two morphisms $\MorAdel{\alpha}$ and $\MorAdel{\alpha'}$ with the same source and range
  as \textbf{equal in $\Adel( \AC )$} if there exist morphisms $a \xrightarrow{\sigma_1} r_b$ and $c_a \xrightarrow{\sigma_2} b$ such that $\alpha - \alpha' = \sigma_1 \cdot \rho_b + \gamma_a \cdot \sigma_2$:
  \begin{center}
     \begin{tikzpicture}[label/.style={postaction={
          decorate,
          decoration={markings, mark=at position .5 with \node #1;}},
          mylabel/.style={thick, draw=black, align=center, minimum width=0.5cm, minimum height=0.5cm,fill=white}}]
          \coordinate (r) at (3,0);
          \coordinate (d) at (0,-2);
          \node (A) {};
          \node (B) at ($(A)+(r)$) {$a$};
          \node (C) at ($(B) + (r)$) {$c_a$};
          \node (A2) at ($(A)+(d)$) {$r_{b}$};
          \node (B2) at ($(B)+(d)$) {${b}$};
          \node (C2) at ($(C)+(d)$) {};
          \draw[->,thick] (B) to node[above]{$\gamma_a$}(C);
          \draw[->,thick] (A2) to node[above]{$\rho_b$} (B2);
          
          \draw[->,thick,label={[mylabel]{$\alpha - \alpha'$}}] (B) -- (B2);
          
          \draw[->,thick,dotted] (B) to node[above,xshift=-0.2em]{$\sigma_1$}(A2);
          \draw[->,thick,dotted] (C) to node[above,xshift=-0.2em]{$\sigma_2$}(B2);
    \end{tikzpicture}
  \end{center}
  We call any such pair $\sigma_1, \sigma_2$ a \textbf{witness pair} for
  the morphisms $\MorAdel{\alpha}$ and $\MorAdel{\alpha'}$ being equal.
 \end{enumerate}
\end{construction}

It is easy to check that the above construction gives rise to a well-defined category.
Moreover, $\Adel( \AC )$ can be seen as an additive quotient of the functor category $\AC^{\Delta}$,
where $\Delta$ is given by the diagram $\bullet \rightarrow \bullet \rightarrow \bullet$. In particular,
$\Adel( \AC )$ inherits its additive structure from $\AC^{\Delta}$, e.g., direct sums are built pointwise.

\begin{remark}[Duality]\label{remark:adel_duality}
 Sending an object 
 \begin{center}
     \begin{tikzpicture}[label/.style={postaction={
          decorate,
          decoration={markings, mark=at position .5 with \node #1;}},
          mylabel/.style={thick, draw=none, align=center, minimum width=0.5cm, minimum height=0.5cm,fill=white}}]
          \coordinate (r) at (3,0);
          \node (A) {$(r_a$};
          \node (B) at ($(A)+(r)$) {$a$};
          \node (C) at ($(B) + (r)$) {$c_a)$};
          \draw[->,thick] (A) to node[above]{$\rho_a$} (B);
          \draw[->,thick] (B) to node[above]{$\gamma_a$}(C);
    \end{tikzpicture}
 \end{center}
 in $\Adel( \AC )$ to
 \begin{center}
     \begin{tikzpicture}[label/.style={postaction={
          decorate,
          decoration={markings, mark=at position .5 with \node #1;}},
          mylabel/.style={thick, draw=none, align=center, minimum width=0.5cm, minimum height=0.5cm,fill=white}}]
          \coordinate (r) at (3,0);
          \node (A) {$(c_a$};
          \node (B) at ($(A)+(r)$) {$a$};
          \node (C) at ($(B) + (r)$) {$r_a)$};
          \draw[->,thick] (A) to node[above]{$\gamma_a^{\op}$} (B);
          \draw[->,thick] (B) to node[above]{$\rho_a^{\op}$}(C);
    \end{tikzpicture}
 \end{center}
 in $\Adel( \AC^{\op} )$ defines an anti-equivalence from $\Adel( \AC )$ to $\Adel( \AC^{\op} )$,
 i.e., an equivalence
 \[
  \Adel( \AC )^{\op} \simeq \Adel( \AC^{\op} ).
 \]
\end{remark}

\subsection{Kernels and cokernels in Adelman categories}

In this subsection, we recall the construction of kernels and cokernels in $\Adel( \AC )$.
Our presented construction differs slightly from that presented in \cite{Adelman},
since we tried to minimize the number of occurring minus signs and maximize the number of occurring zeros within entries of matrices.

\begin{construction}[Cokernels]\label{construction:cokernels}
Given a morphism
\[
  \ObjAdel{r_a}{\rho_a}{a}{\gamma_a}{c_a} \xrightarrow{\{ \omega_{\alpha}, \alpha, \psi_{\alpha} \}} \ObjAdel{r_b}{\rho_b}{b}{\gamma_b}{c_b}
\]
in $\Adel( \AC )$, the following diagram depicts how we can construct its cokernel projection
along with the morphism induced by its universal property:
  \begin{center}
    \begin{tikzpicture}[label/.style={postaction={
            decorate,
            decoration={markings, mark=at position .5 with \node #1;}}}, baseline = (A),
            mylabel/.style={thick, draw=none, align=center, minimum width=0.5cm, minimum height=0.5cm,fill=white}]
          \coordinate (r) at (5.7,0);
          \coordinate (u) at (0,3);
          \node (A) {$\ObjAdel{r_a}{\rho_a}{a}{\gamma_a}{c_a}$};
          \node (B) at ($(A)+(r)$) {$\ObjAdel{r_b}{\rho_b}{b}{\gamma_b}{c_b}$};
          \node (C) at ($(B) + 0.5*(r) + (u)$) 
          {$\ObjAdel{r_b \oplus a}{ \text{  \tiny$\pmattwobytwo{\rho_b}{0}{\alpha}{\gamma_a}$ } }{ b \oplus c_a }{  \text{ \tiny $\pmattwobytwo{\gamma_b}{0}{0}{\id_{c_a}}$ } }{  c_b \oplus c_a } $};
          \node (T) at ($(B) + 0.5*(r) - (u)$) {$\ObjAdel{r_t}{\rho_t}{t}{\gamma_t}{c_t}$.};
          \draw[->,thick] (A) --node[above]{$\{ \omega_{\alpha}, \alpha, \psi_{\alpha} \}$} (B);
          \draw[->,thick] (B) --node[left,xshift=-0.1em]{$\{ \omega_{\tau}, \tau, \psi_{\tau} \}$} (T);
          \draw[->,thick] (B) --node[left,yshift=0.6em,xshift=0.3em]{\tiny $\{ \pmatrow{1}{0}$, $\pmatrow{1}{0}$, $\pmatrow{1}{0} \}$} (C);
          \draw[->,thick, dashed] (C) --node[right]{\tiny $\{ \pmatcol{\omega_{\tau}}{\sigma_1}$, $\pmatcol{\tau}{-\sigma_2}$, $\pmatcol{\psi_{\tau}}{-\sigma_2 \cdot \gamma_t} \}$} (T);
          
          \draw[->,thick,dotted,in=180,out=-50] (A) to node[left,xshift=-1em]{{\tiny witness pair:} $\sigma_1, \sigma_2$} (T);
    \end{tikzpicture}    
  \end{center}
How to read this diagram:
the solid arrow pointing up right is the cokernel projection of $\{ \omega_{\alpha}, \alpha, \psi_{\alpha} \}$.
The solid arrow pointing down right is a test morphism $\{ \omega_{\tau}, \tau, \psi_{\tau} \}$ for the cokernel
with witness pair $\sigma_1: a \longrightarrow r_t$, $\sigma_2: c_a \longrightarrow t$
for the composition 
\[ \{ \omega_{\alpha}, \alpha, \psi_{\alpha} \} \cdot \{ \omega_{\tau}, \tau, \psi_{\tau} \} \] 
being zero, i.e., the equation
\begin{equation}\label{equation:wit_pair_cok}
 \alpha \cdot \tau = \sigma_1 \cdot \rho_t + \gamma_a \cdot \sigma_2
\end{equation}
holds.
The dashed arrow pointing down is the morphism induced by the universal property of the cokernel.
\end{construction}

\begin{remark}[Interpretation of Construction \ref{construction:cokernels}]\label{remark:cokernel_homology_interpretation}
 If $\AC$ is abelian, and if we interpret the depicted composable pairs of Construction \ref{construction:cokernels}
 as homologies in the sense of Remark \ref{remark:homologies}, then Construction \ref{construction:cokernels}
 can be seen as the exact sequence
 \begin{center}
    \begin{tikzpicture}[label/.style={postaction={
            decorate,
            decoration={markings, mark=at position .5 with \node #1;}}}, baseline = (A),
            mylabel/.style={thick, draw=none, align=center, minimum width=0.5cm, minimum height=0.5cm,fill=white}]
          \coordinate (r) at (3,0);
          \node (A) {$\frac{\kernel( \gamma_a )}{\image( \rho_a )}$};
          \node (B) at ($(A)+(r)$) {$\frac{\kernel( \gamma_b )}{\image( \rho_b )}$};
          \node (C) at ($(B) + 2*(r)$) 
          {$
          \frac{\kernel\text{  \tiny$\pmattwobytwo{\gamma_b}{0}{0}{\id_{c_a}}$ }}{\image\text{  \tiny$\pmattwobytwo{\rho_b}{0}{\alpha}{\gamma_a}$ }}
          \simeq
          \frac{ \kernel( \gamma_b ) }{ \image( \rho_b ) + \alpha( \kernel( \gamma_a ) ) }
          $};
          \node (D) at ($(C) + 2*(r)$) {$0$.};
          \draw[->,thick] (A) -- (B);
          \draw[->,thick] (B) -- (C);
          \draw[->,thick] (C) -- (D);
    \end{tikzpicture}    
  \end{center}
\end{remark}

\begin{proof}[Correctness of Construction \ref{construction:cokernels}]
Clearly, the cokernel projection is well-defined.
Its postcomposition with $\{ \omega_{\alpha}, \alpha, \psi_{\alpha} \}$ yields zero
with 
\[
 \pmatrow{0}{1}: a \longrightarrow r_b \oplus a, \hspace{2em} \pmatrow{0}{-1}: c_a \longrightarrow b \oplus c_a
\]
as a witness pair.
For the well-definedness of the induced morphism,
we use the well-definedness of $\{ \omega_{\tau}, \tau, \psi_{\tau} \}$
and \eqref{equation:wit_pair_cok}. Moreover, since $\pmatrow{1}{0} \pmatcol{\tau}{-\sigma_2} = \tau$, the triangle in the diagram commutes.

For the uniqueness of the induced morphism, it suffices to show that the cokernel projection is an epi.
Given a morphism $\{ \omega, \pmatcol{u_1}{u_2}, \psi \}$ from the cokernel object to another object
$
(r_u \stackrel{\rho_u}{\longrightarrow}u \stackrel{\gamma_u}{\longrightarrow} c_u)
$
such that its composition with the cokernel projection $\{ \pmatrow{1}{0}$, $\pmatrow{1}{0}$, $\pmatrow{1}{0} \}$ yields zero
with witness pair 
\[
 \sigma_3: b \longrightarrow r_u, \hspace{2em} \sigma_4: c_b \longrightarrow u,
\]
the morphism $\{ \omega, \pmatcol{u_1}{u_2}, \psi \}$ itself is already zero with witness pair
\[
 \pmatcol{\sigma_3}{0}: b \oplus c_a \longrightarrow r_u, \hspace{2em} \pmatcol{\sigma_4}{u_2}: c_b \oplus c_a \longrightarrow u.
\]
\end{proof}

Due to Remark \ref{remark:adel_duality}, the construction of kernels in $\Adel( \AC )$ can be performed dually,
and we spell it out explicitly for future reference:

\begin{construction}[Kernels]\label{construction:kernels}
  The following diagram can be read analogously to Construction \ref{construction:cokernels}:
\begin{center}
  \begin{tikzpicture}[label/.style={postaction={
          decorate,
          decoration={markings, mark=at position .5 with \node #1;}}}, baseline = (A),
          mylabel/.style={thick, draw=none, align=center, minimum width=0.5cm, minimum height=0.5cm,fill=white}]
        \coordinate (r) at (5.7,0);
        \coordinate (u) at (0,3);
        \node (A) {$\ObjAdel{r_b}{\rho_b}{b}{\gamma_b}{c_b}$};
        \node (B) at ($(A)+(r)$) {$ \ObjAdel{r_a}{\rho_a}{a}{\gamma_a}{c_a}$};
        \node (C) at ($(B) + 0.5*(r) + (u)$) 
        {$\ObjAdel{r_a \oplus r_b}{\pmattwobytwos{\rho_a}{0}{0}{\id_{r_b}}}{a \oplus r_b}{\pmattwobytwos{\gamma_a}{\alpha}{0}{\rho_b}}{c_a \oplus b}$};
        \node (T) at ($(B) + 0.5*(r) - (u)$) {$(r_t \stackrel{\rho_t}{\longrightarrow}t \stackrel{\gamma_t}{\longrightarrow} c_t)$.};
        \draw[->,thick] (B) --node[above]{$\{ \omega_{\alpha}, \alpha, \psi_{\alpha} \}$} (A);
        \draw[->,thick] (T) --node[left,xshift=-0.1em]{$\{ \omega_{\tau}, \tau, \psi_{\tau} \}$} (B);
        \draw[->,thick] (C) --node[left,yshift=0.6em,xshift=0.3em]{\tiny $\{ \pmatcol{1}{0}$, $\pmatcol{1}{0}$, $\pmatcol{1}{0} \}$} (B);
        \draw[->,thick, dashed] (T) --node[right]{\tiny $\{ \pmatrow{\omega_{\tau}}{-\rho_{t} \cdot \sigma_1},  \pmatrow{\tau}{-\sigma_1}, \pmatrow{\psi_{\tau}}{\sigma_2} \}$} (C);
        
        \draw[->,thick,dotted,in=-50,out=180] (T) to node[left,xshift=-1em]{{\tiny witness pair:} $\sigma_1, \sigma_2$} (A);
  \end{tikzpicture}    
\end{center}
\end{construction}

\subsection{Monos as kernels and epis as cokernels in Adelman categories}

In this subsection, we give an explicit construction in $\Adel( \AC )$ that identifies an epi with the cokernel of its kernel.

\begin{construction}[Epis as cokernels]
Given an epi 
\[
 \{ \omega_{\alpha}, \alpha, \psi_{\alpha} \}: 
 \ObjAdel{r_a}{\rho_a}{a}{\gamma_a}{c_a}
 \longrightarrow
 \ObjAdel{r_b}{\rho_b}{b}{\gamma_b}{c_b}
\]
in $\Adel(\AC)$, we will show that it is equal to the cokernel of its kernel
(considered as factor objects of the source).
First, since $\{ \omega_{\alpha}, \alpha, \psi_{\alpha} \}$ is an epi, its cokernel projection (see Construction \ref{construction:cokernels})
\begin{center}
    \begin{tikzpicture}[label/.style={postaction={
            decorate,
            decoration={markings, mark=at position .5 with \node #1;}}}, baseline = (A),
            mylabel/.style={thick, draw=none, align=center, minimum width=0.5cm, minimum height=0.5cm,fill=white}]
          \coordinate (r) at (9.65,0);
          \coordinate (u) at (0,0.25);
          \node (B) {$\ObjAdel{r_b}{\rho_b}{b}{\gamma_b}{c_b}$};
          \node (C) at ($(B) +(r) + (u)$) 
          {
          $\ObjAdel{r_b \oplus a}{ \text{  \tiny$\pmattwobytwo{\rho_b}{0}{\alpha}{\gamma_a}$ } }{ b \oplus c_a }{  \text{ \tiny $\pmattwobytwo{\gamma_b}{0}{0}{\id_{c_a}}$ } }{  c_b \oplus c_a } $
          };
          \node (C2) at ($(B) + 0.55*(r)$) {};
          \draw[->,thick] (B) --node[above]{\tiny $\{ \pmatrow{1}{0}$, $\pmatrow{1}{0}$, $\pmatrow{1}{0} \}$} (C2);
    \end{tikzpicture}    
  \end{center}
is zero. We let
\[
 \pmatrow{\sigma_7}{\sigma_8}: b \longrightarrow r_b \oplus a \hspace{5em}
 \pmatrow{\sigma_5}{\sigma_6}: c_b \longrightarrow b \oplus c_a
\]
denote a witness pair for the cokernel projection being zero, i.e., the equations
\begin{equation}\label{equation:wp_cok1}
 \id_b = \gamma_b \cdot \sigma_5 + \sigma_7 \cdot \rho_b + \sigma_8 \cdot \alpha
\end{equation}
and
\begin{equation}\label{equation:wp_cok2}
 _{b}0_{c_a} = \gamma_b \cdot \sigma_6 + \sigma_8 \cdot \gamma_a
\end{equation}
hold.
Using Construction \ref{construction:cokernels} and Construction \ref{construction:kernels} for computing the cokernel projection
of the kernel embedding of $\{ \omega_{\alpha}, \alpha, \psi_{\alpha} \}$ yields
the solid arrow pointing down right in the diagram
\begin{center}
    \begin{tikzpicture}[label/.style={postaction={
            decorate,
            decoration={markings, mark=at position .5 with \node #1;}}}, baseline = (A),
            mylabel/.style={thick, draw=none, align=center, minimum width=0.5cm, minimum height=0.5cm,fill=white}]
          \coordinate (r) at (7,0);
          \coordinate (u) at (0,3);
          \node (A) {$\ObjAdel{r_a}{\rho_a}{a}{\gamma_a}{c_a}$};
          \node (B) at ($(A)+(r)$) {$\ObjAdel{r_b}{\rho_b}{b}{\gamma_b}{c_b}$};
          \node (C) at ($(A) + (r) - (u)$) 
          {
          $
          \ObjAdel
          {r_a \oplus (a \oplus r_b)}
          { { \text{  \tiny$\pmatthreebythree{\rho_a}{0}{0}{\id_a}{\gamma_a}{\alpha}{0}{0}{\rho_b}$ } } }
          {a \oplus (c_a \oplus b)}
          { { \text{ \tiny $\pmatthreebythree{\gamma_a}{0}{0}{0}{\id_{c_a}}{0}{0}{0}{\id_{b}}$ } } }
          {c_a \oplus (c_a \oplus b)}
          $
          };
          \draw[->,thick] (A) --node[above]{$\{ \omega_{\alpha}, \alpha, \psi_{\alpha} \}$} (B);
          
          \draw[->,thick,shorten >=0.5cm] (A) --node[left,yshift=-0.3em]{\tiny $\{ \pmatrowthree{1}{0}{0}$, $\pmatrowthree{1}{0}{0}$, $\pmatrowthree{1}{0}{0} \}$} ($(C)$);
          
          \draw[->,thick, dashed] (B) --node[right]{\textbf{?}} ($(C)$);
    \end{tikzpicture}    
  \end{center}
If we are able to construct a dashed morphism rendering the triangle in the above diagram commutative, then we are done.
We make the following ansatz for the morphism datum:
\[
 \pmatrowthree{\sigma_8}{-\gamma_b \cdot \sigma_6}{-\gamma_b \cdot \sigma_5}: b \longrightarrow a \oplus (c_a \oplus b)
\]
\end{construction}
\begin{proof}[Correctness of the construction]
The diagram
\begin{center}
    \begin{tikzpicture}[label/.style={postaction={
            decorate,
            decoration={markings, mark=at position .5 with \node #1;}}}, baseline = (A),
            mylabel/.style={thick, draw=none, align=center, minimum width=0.5cm, minimum height=0.5cm,fill=white}]
          \coordinate (r) at (6,0);
          \coordinate (u) at (0,-4);
          
          \node (RB) {$r_b$};
          \node (B) at ($(RB) + (r)$) {$b$};
          \node (CB) at ($(B) + (r)$) {$c_b$};
          
          \node (X) at ($(RB) + (u)$) {$r_a \oplus (a \oplus r_b)$};
          \node (Y) at ($(X) + (r)$) {$a \oplus (c_a \oplus b)$};
          \node (Z) at ($(Y) + (r)$) {$c_a \oplus (c_a \oplus b)$};
          
          \draw[->,thick] (RB) --node[above]{$\rho_b$} (B);
          \draw[->,thick] (B) --node[above]{$\gamma_b$} (CB);
          
          \draw[->,thick] (X) --node[above]{{  \tiny$\pmatthreebythree{\rho_a}{0}{0}{\id_a}{\gamma_a}{\alpha}{0}{0}{\rho_b}$ }} (Y);
          \draw[->,thick] (Y) --node[above]{{ \tiny $\pmatthreebythree{\gamma_a}{0}{0}{0}{\id_{c_a}}{0}{0}{0}{\id_{b}}$ }} (Z);

          \draw[->,thick] (B) --node[mylabel]{$\pmatrowthree{\sigma_8}{-\gamma_b \cdot \sigma_6}{-\gamma_b \cdot \sigma_5}$} (Y);
          \draw[->,thick,dashed] (RB) --node[mylabel]{$\pmatrowthree{0}{\rho_b \cdot \sigma_8}{\rho_b \cdot \sigma_7 - \id_b  }$} (X);
          \draw[->,thick,dashed] (CB) --node[mylabel]{$\pmatrowthree{-\sigma_6}{-\sigma_6}{-\sigma_5}$} (Z);
    \end{tikzpicture}    
  \end{center}
commutes and displays relation and corelation witnesses for our morphism datum due to Equations \ref{equation:wp_cok1} and \ref{equation:wp_cok2}.
The diagram
\begin{center}
    \begin{tikzpicture}[label/.style={postaction={
            decorate,
            decoration={markings, mark=at position .5 with \node #1;}}}, baseline = (A),
            mylabel/.style={thick, draw=none, align=center, minimum width=0.5cm, minimum height=0.5cm,fill=white}]
          \coordinate (r) at (6,0);
          \coordinate (u) at (0,-4);
          
          \node (RB) {};
          \node (B) at ($(RB) + (r)$) {$a$};
          \node (CB) at ($(B) + (r)$) {$c_a$};
          
          \node (X) at ($(RB) + (u)$) {$r_a \oplus (a \oplus r_b)$};
          \node (Y) at ($(X) + (r)$) {$a \oplus (c_a \oplus b)$};

          \draw[->,thick] (B) --node[above]{$\gamma_a$} (CB);
          
          \draw[->,thick] (X) --node[above]{{  \tiny$\pmatthreebythree{\rho_a}{0}{0}{\id_a}{\gamma_a}{\alpha}{0}{0}{\rho_b}$ }} (Y);

          \draw[->,thick] (B) --node[mylabel]{\tiny $\pmatrowthree{\alpha \cdot \sigma_8 - \id_{A} }{- \alpha \cdot \gamma_b \cdot \sigma_6  }{- \alpha \cdot \gamma_b \cdot \sigma_5}$} (Y);
          
          \draw[->,thick, dashed, out = 180, in = 90] (B) to node[mylabel]
          {\tiny $\pmatrowthree{0}{\alpha \cdot \sigma_8 - \id_a}{\alpha \cdot \sigma_7}$}
          (X);
          
          \draw[->,thick, dashed, out = -90, in = 0] (CB) to node[mylabel]
          {\tiny $\pmatrowthree{0}{\id_{c_a}}{0}$}
          (Y);
          
    \end{tikzpicture}    
  \end{center}
displays a witness pair (again due to Equations \ref{equation:wp_cok1} and \ref{equation:wp_cok2}) proving the desired commutativity of the triangle.
\end{proof}

Due to Remark \ref{remark:adel_duality}, the construction for monos as kernels of cokernels in $\Adel( \AC )$ can be performed dually.

\begin{corollary}
 $\Adel( \AC )$ is an abelian category.
\end{corollary}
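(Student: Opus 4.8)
The plan is to deduce the statement by assembling the facts established so far against one of the standard characterizations of abelian categories: an additive category is abelian precisely when it has all kernels and cokernels, every monomorphism is a kernel, and every epimorphism is a cokernel. So I would check these conditions in order.

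First, additivity has already been observed: $\Adel(\AC)$ is an additive quotient of the functor category $\AC^{\Delta}$, so it has a zero object, is enriched over abelian groups, and has direct sums (computed pointwise). Second, the existence of all kernels and cokernels is exactly the content of Construction \ref{construction:cokernels} and Construction \ref{construction:kernels} together with their correctness proofs. Third, the construction of epis as cokernels shows that an arbitrary epimorphism in $\Adel(\AC)$ coincides with the cokernel projection of its own kernel embedding; hence every epimorphism is a cokernel. Fourth, for monomorphisms I would pass to the opposite category: by Remark \ref{remark:adel_duality} there is an equivalence $\Adel(\AC)^{\op} \simeq \Adel(\AC^{\op})$, and any such anti-equivalence interchanges monomorphisms with epimorphisms and kernels with cokernels. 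Applying the previous step inside $\Adel(\AC^{\op})$ and transporting back therefore shows that every monomorphism in $\Adel(\AC)$ is the kernel of its cokernel. These four verifications together give exactly the abelian category axioms.

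Since every ingredient is already in hand, the corollary itself is essentially a bookkeeping statement, and I expect no serious obstacle at this stage. If one were proving the result from scratch, the genuine work would lie in the third step: producing the explicit dashed morphism datum $\pmatrowthree{\sigma_8}{-\gamma_b \cdot \sigma_6}{-\gamma_b \cdot \sigma_5}$ and verifying, via Equations \eqref{equation:wp_cok1} and \eqref{equation:wp_cok2}, that it is well defined and makes the comparison triangle commute modulo the equivalence relation defining morphisms in $\Adel(\AC)$. With that verification (already carried out above) and the duality of Remark \ref{remark:adel_duality}, the conclusion follows.
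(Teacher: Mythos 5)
Your proof is correct and matches the paper's intended argument exactly: the corollary is stated without a separate proof precisely because the preceding constructions (additivity from $\AC^{\Delta}$, kernels and cokernels, epis as cokernels of their kernels, and monos as kernels via the duality of Remark \ref{remark:adel_duality}) supply all the ingredients of the standard characterization of abelian categories. No gaps.
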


\subsection{Computability of Adelman categories}\label{subsection:computability_of_adel}

Due to their explicitness, Adelman categories are amenable to computations.
In a constructive context\footnote{Like Bishop's constructive mathematics, see \cite{MRRConstructiveAlgebra}.}, 
a category is realized by data types for its objects and morphisms,
and algorithms for composition and the construction of identities.
It is called \textbf{computable} if we have an algorithm deciding whether two morphisms are equal.
An explanation of a constructive approach to category theory can be found in \cite[Section 2]{PosFreyd},
with a detailed definition of a computable additive or abelian category in \cite[Appendix]{PosFreyd}.

\begin{definition}\label{definition:decidable_homotopy_equations}
  We say an additive category $\AC$ has \textbf{decidable homotopy equations} if
  it comes equipped with an algorithm whose input are the solid arrows of the diagram
  \begin{center}
    \begin{tikzpicture}[label/.style={postaction={
         decorate,
         decoration={markings, mark=at position .5 with \node #1;}},
         mylabel/.style={thick, draw=black, align=center, minimum width=0.5cm, minimum height=0.5cm,fill=white}}]
         \coordinate (r) at (3,0);
         \coordinate (d) at (0,-2);
         \node (A) {};
         \node (B) at ($(A)+(r)$) {$a$};
         \node (C) at ($(B) + (r)$) {$c$};
         \node (A2) at ($(A)+(d)$) {$d$};
         \node (B2) at ($(B)+(d)$) {${b}$};
         \node (C2) at ($(C)+(d)$) {};
         \draw[->,thick] (B) to node[above]{$\gamma$}(C);
         \draw[->,thick] (A2) to node[above]{$\beta$} (B2);
         
         \draw[->,thick] (B) to node[left]{$\alpha$} (B2);
         
         \draw[->,thick,dotted] (B) to node[above,xshift=-0.2em]{$\sigma_1$}(A2);
         \draw[->,thick,dotted] (C) to node[above,xshift=-0.2em]{$\sigma_2$}(B2);
   \end{tikzpicture}
 \end{center}
  in $\AC$ and whose output are the dotted arrows
  such that
  \[
    \alpha = \sigma_1 \cdot \beta + \gamma \cdot \sigma_2.
  \]
\end{definition}

\begin{theorem}\label{theorem:Adelman_computable}
  Let $\AC$ be an additive category.
  Then $\Adel( \AC )$ is computable abelian if $\AC$ has decidable homotopy equations.
\end{theorem}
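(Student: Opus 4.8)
The plan is to go through the list of data and algorithms that a computable abelian category must carry (see \cite[Appendix]{PosFreyd}) and check that each one is available, the slogan being that everything reduces to computations already possible in the additive category $\AC$ together with a single extra subroutine: the algorithm of Definition \ref{definition:decidable_homotopy_equations}. I would begin by noting that this subroutine already makes $\AC$ itself computable: to decide $\alpha = \alpha'$ for parallel morphisms of $\AC$, run the algorithm on the instance whose middle map is $\alpha - \alpha'$ and both of whose outer objects and morphisms are the zero object and the zero morphisms; the equation then reads $\alpha - \alpha' = 0$ and is solvable exactly when $\alpha = \alpha'$. Hence direct sums, block-matrix manipulations, and equality checks in $\AC$ may all be used freely in what follows.

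Next I would pin down the data types and the additive layer. An object of $\Adel(\AC)$ is stored as a composable pair, i.e. a triple of objects $r_a, a, c_a$ of $\AC$ together with morphisms $\rho_a\colon r_a \to a$ and $\gamma_a\colon a \to c_a$; a morphism is stored as a triple $\{\omega_\alpha, \alpha, \psi_\alpha\}$ of morphisms of $\AC$ (commutativity of the two defining squares is a decidable condition and may be checked, and demanding the witnesses $\omega_\alpha,\psi_\alpha$ as part of the datum costs nothing, since, given $\alpha$ and the source and range, such witnesses are themselves produced by the homotopy-equation algorithm applied with the $\gamma$-component set to zero). Composition is componentwise composition of triples, the identity is $\{\id,\id,\id\}$, the zero object is $(0 \to 0 \to 0)$, the direct sum is formed entrywise from direct sums in $\AC$ with the evident injections and projections, and addition and negation of parallel morphisms are performed componentwise. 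That all of this descends to equivalence classes is immediate, since a null-homotopic morphism datum has the additive shape $\sigma_1 \cdot \rho_b + \gamma_a \cdot \sigma_2$; this is also the content of the observation that $\Adel(\AC)$ is an additive quotient of $\AC^{\Delta}$.

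The conceptual crux is the decision of equality of morphisms. Two parallel morphisms $\{\omega_\alpha,\alpha,\psi_\alpha\}$ and $\{\omega_{\alpha'},\alpha',\psi_{\alpha'}\}$ from $(r_a \xrightarrow{\rho_a} a \xrightarrow{\gamma_a} c_a)$ to $(r_b \xrightarrow{\rho_b} b \xrightarrow{\gamma_b} c_b)$ are equal in $\Adel(\AC)$ precisely when there is a witness pair $\sigma_1\colon a \to r_b$, $\sigma_2\colon c_a \to b$ with $\alpha - \alpha' = \sigma_1 \cdot \rho_b + \gamma_a \cdot \sigma_2$, and this is verbatim an instance of a homotopy equation, obtained from Definition \ref{definition:decidable_homotopy_equations} by taking its middle map to be $\alpha - \alpha'$, its $\beta$ to be $\rho_b$, and its $\gamma$ to be $\gamma_a$. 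Running the algorithm on this instance therefore decides equality; since it reports unsolvability faithfully, this is a genuine decision procedure, and $\Adel(\AC)$ is computable.

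It remains to turn the explicit constructions of the previous subsections into algorithms for the abelian layer, and this is where most of the actual verification work sits, although it is routine. Kernel and cokernel objects, kernel embeddings, and cokernel projections are read straight off Constructions \ref{construction:kernels} and \ref{construction:cokernels} and only involve direct sums and block matrices in $\AC$. For the universal morphisms — the cokernel colift, the kernel lift, and (either as operations derived in any abelian category from kernels and cokernels, or directly from the construction of epimorphisms as cokernels of their kernels and its dual) the lift along a monomorphism and the colift along an epimorphism — the explicit formulas take one extra input: a witness pair for the pertinent ``the composite is zero'' equation (for a colift, equation \eqref{equation:wit_pair_cok}), respectively for ``the test morphism factors through''. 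Each such equation is again a homotopy equation in $\AC$, so the algorithm of Definition \ref{definition:decidable_homotopy_equations} supplies the required $\sigma_1,\sigma_2$, and substituting them into the formulas of the cited constructions yields the universal morphism, with correctness being exactly the content of the correctness proofs already given. I expect the only genuine obstacle to lie here: verifying that the input hypothesis of each universal construction really does unfold into a homotopy equation of the precise shape of Definition \ref{definition:decidable_homotopy_equations} — transparently so for kernel lifts and cokernel colifts, and for a lift along a monomorphism by first rewriting the monomorphism as the kernel of its cokernel so that the hypothesis becomes the vanishing of a composite. Together with the fact, established above, that $\Adel(\AC)$ is abelian, this completes the verification that $\Adel(\AC)$ is a computable abelian category.
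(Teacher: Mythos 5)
Your proposal is correct and follows essentially the same route as the paper's proof: the paper likewise observes that all the constructions (kernels, cokernels, lifts along monos, colifts along epis) are purely formal, with the only non-trivial algorithmic input being the production of witnesses, which is exactly what the homotopy-equation algorithm of Definition \ref{definition:decidable_homotopy_equations} supplies. Your write-up is simply a more detailed unfolding of that argument, including the (correct) key observation that equality of morphisms in $\Adel(\AC)$ is verbatim an instance of a homotopy equation.
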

\begin{proof}
  The constructions of kernels, cokernels, lifts along monos, colifts along epis in Adelman categories
that we have seen are all carried out on a formal level. Only the construction of witnesses requires possibly non-trivial algorithms
for the underlying additive category $\AC$, which is encapsulated in our Definition \ref{definition:decidable_homotopy_equations}.
\end{proof}

\begin{remark}
  We remark that being able to algorithmically decide equality of morphisms in an abelian category is quite powerful.
  Let $\BC$ be a computable abelian category and $b \in \BC$ an object:
  \begin{enumerate}
    \item $b \cong 0$ iff $\id_b = 0$,
  \end{enumerate}
  thus we can decide whether an object is zero, from which we have for a morphism $\beta: b \rightarrow c$
  the following algorithmic tests:
  \begin{enumerate}
    \setcounter{enumi}{1}
    \item $\beta$ is a mono iff $\kernel( \beta ) \cong 0$,
    \item $\beta$ is an epi iff $\cokernel( \beta ) \cong 0$,
    \item $\beta$ is an iso iff it is both a mono and an epi.
  \end{enumerate}
  Furthermore, we can check relations between subobjects given by monos $b' \stackrel{\iota_{b'}}{\hookrightarrow} b \stackrel{\iota_{b''}}{\hookleftarrow} b''$:
  \begin{enumerate}
    \setcounter{enumi}{4}
    \item $b' \leq b''$ iff $\iota_{b'} \cdot \CokernelProjection( \iota_{b''} ) = 0$,
    \item $b' = b''$ as subobjects iff both $b' \leq b''$ and $b'' \leq b'$,
  \end{enumerate}
  and likewise for factor objects.
\end{remark}

In this paper, we are mainly interested in the following class of examples of computable Adelman categories.

\begin{example}\label{example:notation}
  Suppose given a quiver $Q$ and a set of $\Z$-linear relations $R$ of its paths.
  We denote by $\AsCat(Q,R)$ the $\Z$-linear category whose objects are the nodes of $Q$,
  and whose morphism sets are $\Z$-linear combinations of paths of $Q$ modulo the relations provided by $R$.
  If $\CC$ denotes a $\Z$-linear category, the $\CC^{\oplus}$ denotes its additive closure, i.e.,
  objects in $\CC^{\oplus}$ are (possibly empty) tuples of objects in $\CC$,
  and a morphism between two such tuples $(C_1, \dots, C_m)$ and $(D_1, \dots, D_n)$
  is given by an $m \times n$ matrix $A$ whose $(i,j)$-th entry is a morphism $C_i \rightarrow D_j$ in $\CC$,
  where $m,n \in \Nzero$, $1 \leq i \leq m$, $1 \leq j \leq n$, $C_1, \dots, C_m, D_1, \dots, D_n \in \CC$.
  Whenever a tuple $(C_1, \dots, C_m)$ is regarded as an object in $\CC^{\oplus}$, we write $C_1 \oplus \dots \oplus C_m$.
  
  Now, if the quiver $Q$ is finite and acyclic, then the homomorphism sets in $\AsCat(Q,R)$ are all finitely presented abelian groups.
  Solving the equation of Definition \ref{definition:decidable_homotopy_equations} in $\AsCat(Q,R)^{\oplus}$ then
  boils down to solving linear systems over finitely presented abelian groups (see \cite[Corollary 6.11]{PosFreyd}), 
  which in turn boils down to the computation of Hermite normal forms of matrices over $\Z$.
  It follows from Theorem \ref{theorem:Adelman_computable} that $\Adel( \AsCat(Q,R)^{\oplus} )$ is computable abelian in this case.
\end{example}

\section{Universal instances of homological lemmata}\label{section:applications}

In this section, we show how the universal property of Adelman categories (see Subsection \ref{subsection:up_of_adel})
together with the computability of Adelman categories (see Subsection \ref{subsection:computability_of_adel})
can be employed to prove a homological lemma $\mathcal{L}$ computationally.
We make use of the following strategy:
the premise of $\mathcal{L}$ is typically given by a diagram with certain properties situated in an arbitrary abelian category $\BC$
(see the snake lemma \ref{lemma:snake} for an example).
\begin{enumerate}
  \item 
        We construct a specific additive category $\AC$
        such that the premise of $\mathcal{L}$
        is encoded by an additive functor $\AC \rightarrow \BC$.
  \item We prove the conclusion of the homological lemma for the specific premise encoded 
  by the canonical embedding functor $\EmbAdelFunctor: \AC \rightarrow \Adel( \AC )$ (see Remark \ref{remark:can_emb}).
  We call it the \textbf{universal instance} of $\mathcal{L}$.
  \item The universal property of Adelman categories implies that $\mathcal{L}$ holds in general.
\end{enumerate}

In Subsection \ref{subsection:universal_snake}, we apply this strategy to the snake lemma.
In Subsection \ref{subsection:five_lemma}, we apply this strategy to (a refinement of) the 5-lemma.

\subsection{Universal property of Adelman categories}\label{subsection:up_of_adel}

\begin{remark}\label{remark:can_emb}
  An additive category $\AC$ embeds fully into its Adelman category $\Adel( \AC )$ via
  \[
    \EmbAdelFunctor: \AC \rightarrow \Adel( \AC ): a \mapsto \EmbAdel{a}
  \]
\end{remark}

\begin{theorem}[Universal property of the Adelman category, {\cite[Theorem 1.14]{Adelman}}]\label{theorem:up_of_adel}
 Let $\AC$ be an additive category, let $\BC$ be an abelian category, and let $F: \AC \rightarrow \BC$ be an additive functor.
 There exists a unique (up to natural isomorphism) exact functor 
 $\widehat{F}: \Adel( \AC ) \longrightarrow \BC$
 such that
 \begin{center}
    \begin{tikzpicture}[label/.style={postaction={
            decorate,
            decoration={markings, mark=at position .5 with \node #1;}}}, baseline = (A),
            mylabel/.style={thick, draw=none, align=center, minimum width=0.5cm, minimum height=0.5cm,fill=white}]
          \coordinate (r) at (5,0);
          \coordinate (u) at (0,3);
          \node (A) {$\AC$};
          \node (B) at ($(A)+(r)$) {$\Adel( \AC )$};
          \node (C) at ($(A) + (r) - (u)$) {$ \BC $};
          \draw[->,thick] (A) --node[above]{$\EmbAdelFunctor$} (B);
          
          \draw[->,thick] (A) --node[below]{$F$} (C);
          
          \draw[->,thick, dashed] (B) --node[right]{$\widehat{F}$} (C);
    \end{tikzpicture}    
  \end{center}
 commutes up to natural isomorphism.
\end{theorem}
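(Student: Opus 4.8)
The plan is to define $\widehat{F}$ by the very recipe that produces homologies. On an object $\ObjAdel{r_a}{\rho_a}{a}{\gamma_a}{c_a}$ of $\Adel(\AC)$ I set $\widehat{F}\ObjAdel{r_a}{\rho_a}{a}{\gamma_a}{c_a} := \CH\bigl(F r_a \xrightarrow{F\rho_a} F a \xrightarrow{F\gamma_a} F c_a\bigr)$, the homology formed in the abelian category $\BC$ in the sense of Remark~\ref{remark:homologies}. On a morphism $\{\omega_\alpha,\alpha,\psi_\alpha\}$ I observe that $(\omega_\alpha,\alpha,\psi_\alpha)$ is a genuine morphism in $\AC^{\Delta}$ between the underlying composable pairs; I apply $F$ pointwise to land in $\BC^{\Delta}$ and then apply the homology functor $\CH: \BC^{\Delta}\to\BC$, setting $\widehat{F}\{\omega_\alpha,\alpha,\psi_\alpha\} := \CH(F\omega_\alpha,F\alpha,F\psi_\alpha)$.

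For existence I would check the following, in order. \emph{Well-definedness on morphisms:} if $\{\omega_\alpha,\alpha,\psi_\alpha\}$ and $\{\omega_{\alpha'},\alpha',\psi_{\alpha'}\}$ are equal in $\Adel(\AC)$, witnessed by $\sigma_1: a\to r_b$ and $\sigma_2: c_a\to b$ with $\alpha-\alpha'=\sigma_1\cdot\rho_b+\gamma_a\cdot\sigma_2$, then the two induced maps on homology coincide, because on $\kernel(F\gamma_a)$ the summand $F(\gamma_a\cdot\sigma_2)$ vanishes while $F(\sigma_1\cdot\rho_b)$ factors through $\image(F\rho_b)$; this is the usual homotopy-invariance of induced maps on homology, and it simultaneously shows the value is independent of the witnesses $\omega_\alpha,\psi_\alpha$. \emph{Functoriality and additivity} of $\widehat F$ then follow from those of $\CH: \BC^{\Delta}\to\BC$ and of the pointwise-$F$ functor $\AC^{\Delta}\to\BC^{\Delta}$, using that composition in $\Adel(\AC)$ is composition of morphism data and that $\Adel(\AC)$ inherits its additive structure from $\AC^{\Delta}$. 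That $\widehat F\circ\EmbAdelFunctor\simeq F$ is immediate: $\EmbAdel{a}$ has zero relation and corelation morphisms, so $\widehat F\EmbAdel{a}=\CH(0\to Fa\to 0)=Fa$ naturally, and likewise on morphisms. \emph{Exactness} is the one point that genuinely uses the explicit constructions of Subsection 1.2: I would run the cokernel datum of Construction~\ref{construction:cokernels} through $\widehat F$ and note that, since $\BC$ is abelian and $F$ is additive, $\widehat F$ applied to the cokernel object, the cokernel projection, and the original morphism reproduces verbatim the exact sequence of Remark~\ref{remark:cokernel_homology_interpretation} inside $\BC$; hence $\widehat F$ sends the cokernel of $\{\omega_\alpha,\alpha,\psi_\alpha\}$ to a cokernel of $\widehat F\{\omega_\alpha,\alpha,\psi_\alpha\}$. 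Dualizing through Remark~\ref{remark:adel_duality} and Construction~\ref{construction:kernels} gives the same for kernels, and an additive functor between abelian categories that preserves kernels and cokernels is exact.

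For uniqueness I would exploit that $\Adel(\AC)$ is generated by the image of $\EmbAdelFunctor$ in the following precise sense: every object $\ObjAdel{r_a}{\rho_a}{a}{\gamma_a}{c_a}$ is naturally isomorphic to the homology, \emph{computed inside $\Adel(\AC)$ itself}, of the three-term complex $\EmbAdel{r_a}\xrightarrow{\EmbAdelFunctor\rho_a}\EmbAdel{a}\xrightarrow{\EmbAdelFunctor\gamma_a}\EmbAdel{c_a}$, and every morphism is, modulo the Adelman relations, induced on these homologies by a morphism of embedded complexes. Since an exact functor preserves kernels, cokernels and images, it preserves this construction; so if $G: \Adel(\AC)\to\BC$ is exact with $G\circ\EmbAdelFunctor\simeq F$, then $G$ is forced, up to a coherent natural isomorphism, to agree on objects and morphisms with $\CH(F r_a\to F a\to F c_a)$, i.e.\ with $\widehat F$.

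The main obstacle is exactly this last, self-referential ingredient: that an arbitrary object of $\Adel(\AC)$ is naturally the $\Adel(\AC)$-homology of its embedded complex — equivalently, that $\widehat{\EmbAdelFunctor}\simeq\Id_{\Adel(\AC)}$ — together with the compatible statement for morphisms. Checking it amounts to running Construction~\ref{construction:kernels}, Construction~\ref{construction:cokernels} and the mono-as-kernel/epi-as-cokernel constructions on the embedded arrows $\EmbAdelFunctor\rho_a$ and $\EmbAdelFunctor\gamma_a$ and simplifying the resulting block matrices; the computation is purely formal but bookkeeping-heavy, and one additionally has to manage the coherence needed to upgrade "agrees on objects and morphisms" to "naturally isomorphic as functors", which is the delicate part of Adelman's original argument \cite[Theorem 1.14]{Adelman}.
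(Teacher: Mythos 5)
The paper offers no proof of this theorem---it is cited from Adelman---but your sketch follows precisely the route the paper itself signposts: the object formula $\widehat F(\ObjAdel{r_a}{\rho_a}{a}{\gamma_a}{c_a}) = \CH(F r_a \to F a \to F c_a)$ is Remark~\ref{remark:explicit_up_induced_fun}, the homotopy-invariance argument for well-definedness is exactly what the Adelman equivalence relation is designed for, and the uniqueness hinge (every object of $\Adel(\AC)$ is the homology of its embedded complex) is the unnumbered remark following Remark~\ref{remark:explicit_up_induced_fun}. Your approach is correct, and you rightly identify that this last fact must be established by direct computation (not by invoking the universal property, which would be circular) and that upgrading pointwise agreement to a natural isomorphism is the remaining coherence work.
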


\begin{remark}\label{remark:explicit_up_induced_fun}
From the discussion in \cite{Adelman} preceding Theorem 1.14,
it follows that such an $\widehat{F}$ can be described explicitly:
\[
  \widehat{F}: \ObjAdel{r_a}{\rho_a}{a}{\gamma_a}{c_a} \mapsto \CH\ObjAdel{Fr_a}{F\rho_a}{Fa}{F\gamma_a}{Fc_a}
\]
where $\CH$ denotes the homology as described in Remark \ref{remark:homologies}.
\end{remark}

\begin{remark}
  Let $a \xrightarrow{\alpha} b \xrightarrow{\beta} c$ be a composable pair in $\AC$.
  Then
  \[
    \CH\big( \EmbAdel{ a } \xrightarrow{\MorAdel{\alpha}} \EmbAdel{b} \xrightarrow{\MorAdel{\beta}} \EmbAdel{c} \big) \cong \ObjAdel{a}{\alpha}{b}{\beta}{c}
  \]
  in $\Adel( \AC )$. This either follows from the universal property of Adelman categories applied to the embedding functor $\EmbAdelFunctor$ itself,
  or from a direct computation.
\end{remark}

\subsection{The universal instance of the snake lemma}\label{subsection:universal_snake}

First, we recall the statement of the snake lemma.

\begin{lemma}[Snake lemma]\label{lemma:snake}
Suppose given the following commutative diagram with exact rows in an abelian category:

\begin{center}
  \begin{tikzpicture}[label/.style={postaction={
    decorate,
    decoration={markings, mark=at position .5 with \node #1;}},
    mylabel/.style={thick, draw=none, align=center, minimum width=0.5cm, minimum height=0.5cm,fill=white}}, scale = 0.7]
        \coordinate (r) at (2.5,0);
        \coordinate (d) at (0,-2);
        
        {
        \node (V1) {$a$};
        \node (V2) at ($(V1) + (r)$) {$b$};
        \node (V3) at ($(V2) + (d)$) {$c$};
        \node (V4) at ($(V3) + (r)$) {$d$};

        \draw[->,thick] (V1) to node[above]{$\alpha$} (V2);
        \draw[->,thick] (V2) to node[left]{$\beta$} (V3);
        \draw[->,thick] (V3) to node[above]{$\gamma$} (V4);
        }
        
        {
        \node (Coka) at ($(V2) + (r)$) {$e$};
        \draw[->,thick] (V2) to (Coka);
        }
        
        {
        \draw[->,thick] (Coka) to node[right]{$\epsilon$} (V4);
        }

        {
        \node (Kerc) at ($(V3) - (r)$) {$f$};
        \draw[->,thick] (Kerc) to  (V3);
        }
        
        {
          \node (Zu) at ($(Coka) + (r)$) {$0$};
          \node (Zd) at ($(Kerc) - (r)$) {$0$};
          \draw[->,thick] (Coka) to (Zu);
          \draw[->,thick] (Zd) to (Kerc);
        }

        {
        \draw[->,thick] (V1) to node[left]{$\delta$} (Kerc);
        }
        
  \end{tikzpicture}
\end{center}

Then we have a morphism $\partial: \kernel(\epsilon) \rightarrow \cokernel(\delta)$
(called the \textbf{connecting homomorphism}) and an exact sequence (called the \textbf{snake}) of the following form:
    
\begin{center}
   \begin{tikzpicture}[label/.style={postaction={
     decorate,
     decoration={markings, mark=at position .5 with \node #1;}},
     mylabel/.style={thick, draw=none, align=center, minimum width=0.5cm, minimum height=0.5cm,fill=white}}, scale = 1]
         \coordinate (r) at (2.5,0);
         \coordinate (d) at (0,-2);
         
         {
         \node (Kere) {$\kernel( \delta )$};
         \node (Kerb) at ($(Kere) + (r)$) {$\kernel( \beta )$};
         \node (Kerd) at ($(Kerb) + (r)$) {$\kernel( \epsilon )$};
         \node (Coke) at ($(Kerd) + (r)$) {$\cokernel( \delta )$};
         \node (Cokb) at ($(Coke) + (r)$) {$\cokernel( \beta )$};
         \node (Cokd) at ($(Cokb) + (r)$) {$\cokernel( \epsilon )$};

         \draw[->,thick] (Kere) to (Kerb);
         \draw[->,thick] (Kerb) to (Kerd);
         \draw[->,thick] (Kerd) to node[above]{$\partial$} (Coke);
         \draw[->,thick] (Coke) to (Cokb);
         \draw[->,thick] (Cokb) to (Cokd);
         }
         
   \end{tikzpicture}
\end{center}
\end{lemma}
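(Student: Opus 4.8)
The plan is to follow the universal-instance strategy of Section \ref{section:applications}. First, I would encode the premise as an additive category. Let $\AC := \AsCat(Q,R)^{\oplus}$, where $Q$ is the finite acyclic quiver $a \xrightarrow{\alpha} b \xrightarrow{\beta} c \xrightarrow{\gamma} d$ and $R = \{\,\alpha\cdot\beta\cdot\gamma = 0\,\}$; by Example \ref{example:notation}, $\Adel(\AC)$ is then computable abelian. An additive functor $F \colon \AC \to \BC$ into an abelian category is precisely a choice of four objects and three morphisms $F\alpha, F\beta, F\gamma$ with $F\alpha\cdot F\beta\cdot F\gamma = 0$. Given any snake premise in $\BC$, the commutativity of the left square gives $\alpha\cdot\beta = \delta\cdot(f\hookrightarrow c)$, and since exactness of the lower row forces $(f \hookrightarrow c)\cdot\gamma = 0$ (as $f \cong \kernel(\gamma)$), we get $\alpha\cdot\beta\cdot\gamma = 0$; so every snake premise yields such an $F$. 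Conversely, using uniqueness of kernels and cokernels we may assume without loss of generality that in the given premise $e = \cokernel(\alpha)$ and $f = \kernel(\gamma)$ with $b \to e$, $f \to c$ the canonical morphisms and $\delta, \epsilon$ the induced ones; since both the hypotheses and the conclusion of the lemma are invariant under replacing the premise by an isomorphic one, it suffices to treat premises of this canonical form, i.e.\ those coming from an additive functor $F$.

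Second, I would carry out the computation in the universal instance $E := \EmbAdelFunctor \colon \AC \to \Adel(\AC)$. Since $\alpha\cdot\beta\cdot\gamma = 0$ in $\AC$, we have $E\alpha\cdot E\beta\cdot E\gamma = 0$ in $\Adel(\AC)$, so the rows $Ea \to Eb \to \cokernel(E\alpha) \to 0$ and $0 \to \kernel(E\gamma) \to Ec \to Ed$ are exact by construction, the induced maps $\epsilon \colon \cokernel(E\alpha)\to Ed$ and $\delta \colon Ea \to \kernel(E\gamma)$ exist, and we are in a genuine snake premise. Using Constructions \ref{construction:cokernels} and \ref{construction:kernels}, I would write down, as explicit composable pairs of finite direct sums with matrix differentials over $\AC$, the six objects $\kernel(\delta), \kernel(E\beta), \kernel(\epsilon), \cokernel(\delta), \cokernel(E\beta), \cokernel(\epsilon)$, construct the connecting morphism $\partial \colon \kernel(\epsilon) \to \cokernel(\delta)$ explicitly (this is the formula recovered in Remark \ref{remark:concrete_formula}), and verify that the resulting six-term sequence is a complex and is exact. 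Because $\Adel(\AC)$ is computable abelian (Theorem \ref{theorem:Adelman_computable}), each of the finitely many exactness checks reduces, via the subobject tests in the remark following that theorem (e.g.\ comparing $\image$ and $\kernel$), to deciding equality of morphisms in $\Adel(\AC)$, hence ultimately to Hermite normal form computations over $\Z$.

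Third, I would transport the conclusion back. By the universal property (Theorem \ref{theorem:up_of_adel}) there is an exact functor $\widehat{F} \colon \Adel(\AC) \to \BC$ with $\widehat{F}\circ E \simeq F$; exactness of $\widehat{F}$ lets it commute with all the kernels and cokernels above, so applying $\widehat{F}$ to the universal snake yields an exact six-term sequence in $\BC$. Its objects are (canonically identified with) $\kernel(\delta), \kernel(\beta), \kernel(\epsilon), \cokernel(\delta), \cokernel(\beta), \cokernel(\epsilon)$ of the given premise — using $\widehat{F}(\kernel(E\gamma)) \cong \kernel(F\gamma)$, $\widehat{F}(\cokernel(E\alpha)) \cong \cokernel(F\alpha)$, and uniqueness of the induced maps — and its connecting map $\widehat{F}(\partial)$ is the desired $\partial$ for the lemma. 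This proves the statement.

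The main obstacle is the middle step: producing the explicit matrices for the six (co)kernel objects and for $\partial$ in $\Adel(\AC)$, and organizing the six exactness verifications so they are actually tractable. Although each is "only" a finite linear-algebra computation thanks to computability, this is where the real content lies; the first and third steps are formal consequences of uniqueness of (co)kernels and of the universal property. A minor point requiring care is the legitimacy of the reduction to canonical premises, which rests on the routine observation that the hypotheses and conclusion of the snake lemma are invariant under isomorphism of the premise diagram.
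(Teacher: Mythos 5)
Your proposal follows essentially the same route as the paper: the reduction to a canonical premise is the paper's Lemma \ref{lemma:snake_rephrased}, the encoding via the quiver $a \to b \to c \to d$ with relation $\alpha\cdot\beta\cdot\gamma = 0$ and the computation in $\Adel(\AsCat(Q,R)^{\oplus})$ is the paper's universal instance (Lemma \ref{lemma:universal_snake_lemma}), and the transport along $\widehat{F}$ is the paper's concluding corollary. The middle step you flag as the main obstacle is exactly what the paper supplies explicitly in Figure \ref{fig:universal_instance_snake} and Computation \ref{computation:test_exactness}, so your plan is correct and complete in outline, with only that explicit matrix computation left to be carried out.
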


Note that in the premise of Lemma \ref{lemma:snake} we always have $\alpha \cdot \beta \cdot \gamma = 0$.
Conversely, if we are given three consecutive morphisms $\alpha$, $\beta$, $\gamma$ whose composition yields zero,
the diagram in the premise of Lemma \ref{lemma:snake} can be reconstructed up to isomorphism. Thus, we may rephrase the snake lemma
as follows:

\begin{lemma}[Snake lemma, rephrased]\label{lemma:snake_rephrased}
  Suppose given three morphisms in an abelian category
  $a \xrightarrow{\alpha} b \xrightarrow{\beta} c \xrightarrow{\gamma} d$ 
  such that $\alpha \cdot \beta \cdot \gamma = 0$.
  Then $\alpha, \beta, \gamma$ fit into a commutative diagram with exact rows
  \begin{center}
    \begin{tikzpicture}[label/.style={postaction={
      decorate,
      decoration={markings, mark=at position .5 with \node #1;}},
      mylabel/.style={thick, draw=none, align=center, minimum width=0.5cm, minimum height=0.5cm,fill=white}}, scale = 0.7]
          \coordinate (r) at (2.5,0);
          \coordinate (d) at (0,-2);
          
          {
          \node (V1) {$a$};
          \node (V2) at ($(V1) + (r)$) {$b$};
          \node (V3) at ($(V2) + (d)$) {$c$};
          \node (V4) at ($(V3) + (r)$) {$d$};

          \draw[->,thick] (V1) to node[above]{$\alpha$} (V2);
          \draw[->,thick] (V2) to node[left]{$\beta$} (V3);
          \draw[->,thick] (V3) to node[above]{$\gamma$} (V4);
          }
          
          {
          \node (Coka) at ($(V2) + (r)$) {$\cokernel(\alpha)$};
          \draw[->,thick] (V2) to (Coka);
          }
          
          {
          \draw[->,thick] (Coka) to node[right]{$\epsilon$} (V4);
          }

          {
          \node (Kerc) at ($(V3) - (r)$) {$\kernel(\gamma)$};
          \draw[->,thick] (Kerc) to  (V3);
          }
          
          {
            \node (Zu) at ($(Coka) + (r)$) {$0$};
            \node (Zd) at ($(Kerc) - (r)$) {$0$};
            \draw[->,thick] (Coka) to (Zu);
            \draw[->,thick] (Zd) to (Kerc);
          }

          {
          \draw[->,thick] (V1) to node[left]{$\delta$} (Kerc);
          }
          
    \end{tikzpicture}
  \end{center}
  and we have an exact sequence
  \begin{center}
    \begin{tikzpicture}[label/.style={postaction={
      decorate,
      decoration={markings, mark=at position .5 with \node #1;}},
      mylabel/.style={thick, draw=none, align=center, minimum width=0.5cm, minimum height=0.5cm,fill=white}}, scale = 1]
          \coordinate (r) at (2.5,0);
          \coordinate (d) at (0,-2);
          
          {
          \node (Kere) {$\kernel( \delta )$};
          \node (Kerb) at ($(Kere) + (r)$) {$\kernel( \beta )$};
          \node (Kerd) at ($(Kerb) + (r)$) {$\kernel( \epsilon )$};
          \node (Coke) at ($(Kerd) + (r)$) {$\cokernel( \delta )$};
          \node (Cokb) at ($(Coke) + (r)$) {$\cokernel( \beta )$};
          \node (Cokd) at ($(Cokb) + (r)$) {$\cokernel( \epsilon )$};

          \draw[->,thick] (Kere) to (Kerb);
          \draw[->,thick] (Kerb) to (Kerd);
          \draw[->,thick] (Kerd) to node[above]{$\partial$} (Coke);
          \draw[->,thick] (Coke) to (Cokb);
          \draw[->,thick] (Cokb) to (Cokd);
          }
          
    \end{tikzpicture}
 \end{center}
  \end{lemma}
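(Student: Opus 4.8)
First, I would set up the premise as an additive functor, following the strategy of this section. Let $Q$ be the finite acyclic quiver $\bullet \xrightarrow{\alpha} \bullet \xrightarrow{\beta} \bullet \xrightarrow{\gamma} \bullet$ and let $R = \{\alpha\cdot\beta\cdot\gamma\}$ be the single relation, and set $\AC := \AsCat(Q,R)^{\oplus}$. By Example~\ref{example:notation}, $\Adel(\AC)$ is a computable abelian category, the homotopy equations of Definition~\ref{definition:decidable_homotopy_equations} being solvable via Hermite normal forms over $\Z$. A $\Z$-linear, hence additive, functor $F : \AC \to \BC$ into an abelian category $\BC$ is exactly the datum of four objects together with three morphisms $a \xrightarrow{\alpha} b \xrightarrow{\beta} c \xrightarrow{\gamma} d$ in $\BC$ with $\alpha\cdot\beta\cdot\gamma = 0$; so the premise of the lemma is precisely such an $F$, and it is enough to prove the conclusion for the universal instance $F = \EmbAdelFunctor : \AC \to \Adel(\AC)$.

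Second, I would carry out the computation of the universal instance inside $\Adel(\AC)$. Writing $a,b,c,d$ and $\alpha,\beta,\gamma$ also for the images in $\Adel(\AC)$ of the nodes and arrows, use Construction~\ref{construction:cokernels} to compute $\cokernel(\alpha)$ with its projection and Construction~\ref{construction:kernels} to compute $\kernel(\gamma)$ with its embedding; since $\alpha\cdot(\beta\cdot\gamma) = 0$, the universal property of the cokernel yields $\epsilon : \cokernel(\alpha) \to d$, and dually $(\alpha\cdot\beta)\cdot\gamma = 0$ yields $\delta : a \to \kernel(\gamma)$, giving the commutative diagram with exact rows. Then compute $\kernel(\delta)$, $\kernel(\beta)$, $\kernel(\epsilon)$, $\cokernel(\delta)$, $\cokernel(\beta)$, $\cokernel(\epsilon)$ with the evident maps between consecutive terms, and --- the heart of the matter --- the connecting morphism $\partial : \kernel(\epsilon) \to \cokernel(\delta)$, read off by chasing the explicit matrix presentations produced by the two constructions (the resulting formula is Dowker's, Remark~\ref{remark:concrete_formula}). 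Finally verify exactness of the six-term sequence: since $\Adel(\AC)$ is computable, exactness at each inner spot is the decidable condition that the image of the incoming map equals, as a subobject, the kernel of the outgoing one, and checking it is a finite linear-algebra computation. The outcome is recorded in Figure~\ref{fig:universal_instance_snake}.

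Third, I would transport the conclusion along the universal property. Given an arbitrary abelian $\BC$ and morphisms $\alpha,\beta,\gamma$ with $\alpha\cdot\beta\cdot\gamma = 0$, let $F : \AC \to \BC$ be the corresponding additive functor and $\widehat{F} : \Adel(\AC) \to \BC$ the exact functor of Theorem~\ref{theorem:up_of_adel}, so that $\widehat{F}\circ\EmbAdelFunctor \simeq F$; by Remark~\ref{remark:explicit_up_induced_fun} this natural isomorphism identifies the images of the nodes and arrows of $Q$ with $a,b,c,d$ and $\alpha,\beta,\gamma$. Exact functors preserve kernels, cokernels, images, factorizations through (co)kernels, and exactness of sequences, so applying $\widehat{F}$ to the universal instance produces in $\BC$ the commutative diagram with exact rows (with $\cokernel(\alpha)$, $\kernel(\gamma)$, $\epsilon$, $\delta$ as in the statement) together with the exact snake sequence, the connecting homomorphism being $\widehat{F}(\partial)$; note that only the existence of some such $\partial$ is asserted here, its universal uniqueness being treated separately.

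The step I expect to be the main obstacle is the second one: producing the explicit morphism $\partial$ and verifying exactness at all four inner spots, carried out on the matrix data manufactured by Constructions~\ref{construction:cokernels} and~\ref{construction:kernels}. This is mechanical but lengthy, and is exactly the portion that a computer-algebra implementation performs. As an aside, one could instead deduce the statement directly from the classical snake lemma (Lemma~\ref{lemma:snake}) by reconstructing $e := \cokernel(\alpha)$, $f := \kernel(\gamma)$ and obtaining $\epsilon,\delta$ from the universal properties of cokernel and kernel using $\alpha\cdot(\beta\cdot\gamma) = 0$ and $(\alpha\cdot\beta)\cdot\gamma = 0$; the computational route is preferred here because it additionally yields the explicit formula for $\partial$ and sets up the uniqueness discussion that follows.
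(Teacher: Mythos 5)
Your proposal is correct and follows essentially the same route as the paper: encode the premise as an additive functor out of $\AsCat(Q,R)^{\oplus}$, prove the universal instance by explicit computation in $\Adel(\AsCat(Q,R)^{\oplus})$ (Lemma \ref{lemma:universal_snake_lemma}, with the exactness checks reduced to witness-pair computations as in Computation \ref{computation:test_exactness}), and transport the conclusion along the exact functor $\widehat{F}$ supplied by Theorem \ref{theorem:up_of_adel}. The only content you defer --- the explicit matrix data and witness pairs for the universal instance --- is exactly what the paper records in Figure \ref{fig:universal_instance_snake} and Computation \ref{computation:test_exactness}, and you correctly identify it as the sole non-formal part of the argument.
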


Note that the premise of Lemma \ref{lemma:snake_rephrased}
can now be encoded as an additive functor:
if $Q$ denotes the quiver given by $a \xrightarrow{\alpha} b \xrightarrow{\beta} c \xrightarrow{\gamma} d$
with relation $R$ given by  $\alpha \cdot \beta \cdot \gamma = 0$,
then an additive functor from $\AsCat( Q, R )^{\oplus}$ (see Example \ref{example:notation} for the notation) to an abelian category $\BC$
corresponds up to isomorphism to a triple of consecutive morphisms in $\BC$ whose composition is zero,
simply by evaluating the functor at $\alpha, \beta, \gamma$.
Thus, we are ready to state and prove the universal instance of the (rephrased) snake lemma.

\begin{lemma}[The universal instance of the snake lemma]\label{lemma:universal_snake_lemma}
  Let $Q$ denote the quiver given by
  \[ a \xrightarrow{\alpha} b \xrightarrow{\beta} c \xrightarrow{\gamma} d \]
  and let $R$ be given by the single relation
  \[
    \alpha \cdot \beta \cdot \gamma = 0.
  \]
  
  Then the diagram in $\Adel( \AsCat( Q, R )^{\oplus} )$ depicted in Figure \ref{fig:universal_instance_snake} commutes, has exact rows, exact columns,
  and moreover, the blue (snake) sequence is exact.
  \begin{figure}
    \caption{The universal instance of the snake lemma}
  \label{fig:universal_instance_snake}
  \begin{center}
    \begin{tikzpicture}[label/.style={postaction={
      decorate,
      decoration={markings, mark=at position .5 with \node #1;}}},
      mylabel/.style={thick, draw=black, align=center, minimum width=0.5cm, minimum height=0.5cm,fill=white}]
          \coordinate (r) at (4.3,0);
          \coordinate (d) at (0,-3);
          
          {
          \node (V1) {$\EmbAdel{a}$};
          \node (V2) at ($(V1) + (r)$) {$\EmbAdel{b}$};
          \node (V3) at ($(V2) + (d)$) {$\EmbAdel{c}$};
          \node (V4) at ($(V3) + (r)$) {$\EmbAdel{d}$};
          
          \draw[->,thick] (V1) to node[above]{$\MorAdel{\alpha}$} (V2);
          \draw[->,thick] (V2) to node[right]{$\MorAdel{\beta}$} (V3);
          \draw[->,thick] (V3) to node[above]{$\MorAdel{\gamma}$} (V4);
          }

          {
          \node (Coka) at ($(V2) + (r)$) {$\ObjAdel{a}{\alpha}{b}{}{0}$};
          \draw[->,thick] (V2) to node[above]{$\MorAdel{\id_{b}}$} (Coka);
          }
          
          {
          \draw[->,thick] (Coka) to node[right]{$\MorAdel{\beta \cdot \gamma}$} (V4);
          }
          
          {
          \node (CokaZ) at ($(Coka) + 0.65*(r)$) {$0$};
          \draw[->,thick] (Coka) to (CokaZ);
          }

          {
          \node (K) at ($(Coka) - (d)$) {$\ObjAdel{a}{\alpha}{b}{\beta \cdot \gamma}{d}$};
          \draw[->,thick] (K) to node[right]{$\MorAdel{\id_{b}}$} (Coka);
          }

          {
          \node (Kerc) at ($(V3) - (r)$) {$\ObjAdel{0}{}{c}{\gamma}{d}$};
          \draw[->,thick] (Kerc) to node[above]{$\MorAdel{\id_{c}}$} (V3);
          }
          
          {
          \node (KercZ) at ($(Kerc) - 0.65*(r)$) {$0$};
          \draw[->,thick] (KercZ) to (Kerc);
          }

          {
          \draw[->,thick] (V1) to node[left]{$\MorAdel{\alpha \cdot \beta}$} (Kerc);
          }

          {
          \node (C) at ($(Kerc) + (d)$) {$\ObjAdel{a}{\alpha \cdot \beta}{c}{\gamma}{d}$};
          \draw[->,thick] (Kerc) to node[left]{$\MorAdel{\id_{c}}$} (C);
          }
          
          {
          \node (Kerb) at ($(V2) - (d)$) {$\ObjAdel{0}{}{b}{\beta}{c}$};
          \draw[->,thick,color = \snakecolor] (Kerb) to node[above]{$\MorAdel{\id_b}$} (K);
          \draw[->,thick] (Kerb) to node[right]{$\MorAdel{\id_b}$} (V2);
          
          \node (Kerab) at ($(V1) - (d)$) {$\ObjAdel{0}{}{a}{\alpha \cdot \beta}{c}$};
          \draw[->,thick] (Kerab) to node[left]{$\MorAdel{\id_a}$} (V1);
          \draw[->,thick,color = \snakecolor] (Kerab) to node[above]{$\MorAdel{\alpha}$} (Kerb);
          }
          
          {
          \node (Cokb) at ($(V3) + (d)$) {$\ObjAdel{b}{\beta}{c}{}{0}$};
          \draw[->,thick,color = \snakecolor] (C) to node[above]{$\MorAdel{\id_{c}}$} (Cokb);
          \draw[->,thick] (V3) to node[right]{$\MorAdel{\id_c}$} (Cokb);
          
          \node (Cokbc) at ($(V4) + (d)$) {$\ObjAdel{b}{\beta \cdot \gamma}{d}{}{0}$};
          \draw[->,thick,color = \snakecolor] (Cokb) to node[above]{$\MorAdel{\gamma}$} (Cokbc);
          \draw[->,thick] (V4) to node[right]{$\MorAdel{\id_{d}}$} (Cokbc);
          }
          
          {
            \node (KerabZ) at ($(Kerab) - 0.65*(d)$) {$0$};
            \draw[->,thick] (KerabZ) to (Kerab);
            
            \node (KerbZ) at ($(Kerb) - 0.65*(d)$) {$0$};
            \draw[->,thick] (KerbZ) to (Kerb);
            
            \node (Kzero) at ($(K) - 0.65*(d)$) {$0$};
            \draw[->,thick] (Kzero) to (K);
          }
          
          {
            \node (CokbcZ) at ($(Cokbc) + 0.65*(d)$) {$0$};
            \draw[->,thick] (Cokbc) to (CokbcZ);
            
            \node (CokbZ) at ($(Cokb) + 0.65*(d)$) {$0$};
            \draw[->,thick] (Cokb) to (CokbZ);
            
            \node (Czero) at ($(C) + 0.65*(d)$) {$0$};
            \draw[->,thick] (C) to (Czero);
          }

          \draw[->,thick,rounded corners,color = \snakecolor] ($(K.east)$) -| node[mylabel]{$\MorAdel{\beta}$} ($(CokaZ.east) + 0.3*(d) + 0.1*(r)$) -| ($(KercZ.west) - 0.1*(r)$) |-  ($(C.west)$);
    \end{tikzpicture}
    \end{center}
    \end{figure}
\end{lemma}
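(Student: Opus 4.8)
The plan is to verify all four assertions --- commutativity, exact rows, exact columns, exact snake --- by explicit computation inside $\Adel(\AsCat(Q,R)^{\oplus})$, in the same witness-pair style as the correctness proofs of Constructions \ref{construction:cokernels} and \ref{construction:kernels}. Since $\AsCat(Q,R)^{\oplus}$ has decidable homotopy equations (Example \ref{example:notation}), each of the finitely many conditions involved reduces to exhibiting morphisms $\sigma_1,\sigma_2$ as in Definition \ref{definition:decidable_homotopy_equations} and checking a few matrix identities, and in particular nothing here presupposes the snake lemma. Two observations organize the work. First, $\Adel(\AsCat(Q,R)^{\oplus})$ is abelian (shown above), so the needed kernels and cokernels exist. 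Second, the relation $R$ makes $\alpha\cdot\beta\cdot\gamma$ the zero morphism of $\AsCat(Q,R)^{\oplus}$, and this is precisely what renders the morphisms with datum $\beta\cdot\gamma$ or $\alpha\cdot\beta$ well defined (e.g. $\MorAdel{\beta\cdot\gamma}\colon\ObjAdel{a}{\alpha}{b}{}{0}\to\EmbAdel{d}$ requires the square $\alpha\cdot(\beta\cdot\gamma)=0$); the remaining morphisms in Figure \ref{fig:universal_instance_snake} carry obvious relation/corelation witnesses built from zeros and identities (for instance $\MorAdel{\beta}\colon\ObjAdel{a}{\alpha}{b}{\beta\cdot\gamma}{d}\to\ObjAdel{a}{\alpha\cdot\beta}{c}{\gamma}{d}$ has relation witness $\id_a$ and corelation witness $\id_d$).

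The key simplification is to recognize the whole figure as the snake diagram of a single datum. Applying Construction \ref{construction:cokernels} to $\MorAdel{\alpha}$ and deleting the vanishing summands identifies $\cokernel(\MorAdel{\alpha})$ with $\ObjAdel{a}{\alpha}{b}{}{0}$, and dually Construction \ref{construction:kernels} identifies $\kernel(\MorAdel{\gamma})$ with $\ObjAdel{0}{}{c}{\gamma}{d}$; iterating, Constructions \ref{construction:kernels} and \ref{construction:cokernels} together with $\alpha\beta\gamma=0$ identify every object in Figure \ref{fig:universal_instance_snake} with the kernel, cokernel, or subquotient of $\MorAdel{\alpha},\MorAdel{\beta},\MorAdel{\gamma}$ that it represents --- for example $\ObjAdel{a}{\alpha}{b}{\beta\cdot\gamma}{d}\cong\kernel\bigl(\MorAdel{\beta\cdot\gamma}\colon\cokernel(\MorAdel{\alpha})\to\EmbAdel{d}\bigr)$ and $\ObjAdel{a}{\alpha\cdot\beta}{c}{\gamma}{d}\cong\cokernel\bigl(\MorAdel{\alpha\cdot\beta}\colon\EmbAdel{a}\to\kernel(\MorAdel{\gamma})\bigr)$ --- and the displayed arrows with the corresponding canonical morphisms. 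Thus Figure \ref{fig:universal_instance_snake} is, up to the evident isomorphisms, the snake configuration of the morphisms $\MorAdel{\alpha},\MorAdel{\beta},\MorAdel{\gamma}$ of composite zero in the abelian category $\Adel(\AsCat(Q,R)^{\oplus})$. From here, commutativity of the squares, exactness of the rows, and exactness of the columns (the latter being just the canonical four-term kernel--cokernel sequences) follow; one may either deduce them from Lemma \ref{lemma:snake_rephrased}, at the cost of presupposing the ordinary snake lemma, or keep the argument self-contained and verify them by the explicit witness pairs, using the self-duality $\Adel(\AsCat(Q,R)^{\oplus})^{\op}\simeq\Adel(\AsCat(Q^{\op},R^{\op})^{\oplus})$ of Remark \ref{remark:adel_duality} --- legitimate since $\AsCat(Q^{\op},R^{\op})^{\oplus}$ is again of the form in Example \ref{example:notation} --- to halve the work by swapping the ``kernel half'' and the ``cokernel half'' of the figure.

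The main obstacle is the exactness of the blue (snake) sequence at the two objects flanking the connecting morphism $\MorAdel{\beta}$, namely at $\ObjAdel{a}{\alpha}{b}{\beta\cdot\gamma}{d}$ (incoming $\MorAdel{\id_b}$, outgoing $\MorAdel{\beta}$) and at $\ObjAdel{a}{\alpha\cdot\beta}{c}{\gamma}{d}$ (incoming $\MorAdel{\beta}$, outgoing $\MorAdel{\id_c}$): this is where the actual content of the snake lemma resides, and the witness pairs demanded there encode, in the language of Definition \ref{definition:decidable_homotopy_equations}, exactly the classical diagram chase --- choose a preimage along a cokernel projection, apply $\beta$, observe that it lands in a kernel, and read off its class. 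I expect most of the effort to go into producing, for exactness at $\ObjAdel{a}{\alpha}{b}{\beta\cdot\gamma}{d}$, the homotopy witnessing $\MorAdel{\id_b}\cdot\MorAdel{\beta}=0$ together with the factorization through $\MorAdel{\id_b}$ of an arbitrary test morphism killed by $\MorAdel{\beta}$ (and dually at the other spot); the remaining exactness points and commutativities reduce to witnesses assembled from $0$, identities, and the single relation $\alpha\beta\gamma=0$, and are routine though lengthy, and fully mechanizable in \CapPkg.
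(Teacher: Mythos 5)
Your overall strategy is the paper's: the columns and the two middle rows are exact because they are literally the outputs of Constructions \ref{construction:kernels} and \ref{construction:cokernels}, commutativity and well-definedness are checked on the level of morphism data with witnesses built from zeros, identities, and the single relation $\alpha\cdot\beta\cdot\gamma=0$, and the only substantive work is the exactness of the blue sequence, which reduces to homotopy equations that are decidable in $\AsCat(Q,R)^{\oplus}$ by Example \ref{example:notation}. However, there are two gaps at exactly the point you correctly identify as carrying the content. First, your stated criterion for exactness at $\ObjAdel{a}{\alpha}{b}{\beta\cdot\gamma}{d}$ --- that an \emph{arbitrary} test morphism killed by $\MorAdel{\beta}$ factor through the incoming $\MorAdel{\id_b}$ --- is strictly stronger than exactness: it amounts to lifting along the epimorphism from $\ObjAdel{0}{}{b}{\beta}{c}$ onto its image, which a general test object need not admit. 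The check that actually has to be made (and that the paper makes in Computation \ref{computation:test_exactness}) is that the composite of the kernel embedding of the outgoing morphism with the cokernel projection of the incoming one vanishes, both computed via Constructions \ref{construction:kernels} and \ref{construction:cokernels}.

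Second, decidability of homotopy equations only guarantees that this vanishing can be \emph{decided}; it does not guarantee that the answer is yes, and your proposal stops before producing the witness. The paper discharges this by exhibiting the concrete witness pair
\[
  \pmattwobytwo{0}{\id_b}{-\id_a}{\alpha}\colon b\oplus a \longrightarrow a\oplus b,
  \qquad
  \pmattwobytwo{0}{0}{0}{-\id_c}\colon d\oplus c \longrightarrow b\oplus c
\]
(the case $s=1$ of Computation \ref{computation:test_exactness}), and analogous witnesses are needed at the other three interior points of the blue sequence. Until these witnesses are written down (or the positive output of the decision procedure is recorded), the exactness of the snake --- the one assertion of the lemma that does not hold by construction --- remains unproved. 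Everything else in your outline is sound and matches the paper's proof.
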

\begin{proof}
  All morphisms are trivially seen to be well-defined, and all squares commute already on the level of morphism data.
  The columns and the two rows in the middle are exact since we simply applied Construction \ref{construction:cokernels}
  and Construction \ref{construction:kernels}
  for the creation of cokernels and kernels. There are obvious witnesses for the composition of two consecutive morphisms being zero in the blue sequence.
  The only non-trivial computations are the verifications of exactness of the blue sequence.
  For example,
  in order to show that the blue sequence is exact at the source of the connecting homomorphism,
  we need to perform Computation \ref{computation:test_exactness}
  for $s = 1$.
  The other verifications are performed by similar computations.
\end{proof}

\begin{computation}\label{computation:test_exactness}
  For a parameter $s \in \Z$, we wish to see whether the sequence of the top row in the following diagram has zero homology:
  \begin{center}
    \begin{tikzpicture}[label/.style={postaction={
         decorate,
         decoration={markings, mark=at position .5 with \node #1;}},
         mylabel/.style={thick, draw=none, align=center, minimum width=0.5cm, minimum height=0.5cm,fill=white}},scale = 0.8]
         \coordinate (r) at (5,0);
         \coordinate (d) at (0,-3);
         \node (A) {$\ObjAdel{0}{}{b}{\beta}{c}$};
         \node (B) at ($(A)+(r)$) {$\ObjAdel{a}{\alpha}{b}{\beta \cdot \gamma}{d}$};
         \node (C) at ($(B) + (r)$) {$\ObjAdel{a}{\alpha \cdot \beta}{c}{\gamma}{d}$};
         \node (ker) at ($(A) + (d) + 0.1*(r)$) {$\ObjAdel{a^{\oplus 2}}{\pmattwobytwos{\alpha}{0}{0}{\id_a}}{b \oplus a}{\pmattwobytwos{\beta \cdot \gamma}{s\beta}{0}{\alpha \cdot \beta}}{d \oplus c}$};
         \node (coker) at ($(ker) + 2*(r)$) {$\ObjAdel{a \oplus b}{\pmattwobytwos{\alpha}{0}{\id_b}{\beta}}{b \oplus c}{\pmattwobytwos{\beta \cdot \gamma}{0}{0}{\id_c}}{d \oplus c}$};
         
         \draw[->,thick] (A) to node[above]{$\MorAdel{\id_b}$} (B);
         \draw[->,thick] (B) to node[above]{$\MorAdel{s\beta}$}(C);
         \draw[right hook->,thick] (ker) to (B);
         \draw[->,thick] ($(ker.east) + 0.15*(d)$) to node[above,yshift=0.5em]{$\pmattwobytwos{id_b}{0}{0}{0}$} ($(coker.west) + 0.15*(d)$);
         \draw[->>,thick] (B) to (coker);
   \end{tikzpicture}
\end{center}
But this amounts to showing that the morphism in the bottom row, which is the composition of the kernel embedding
of $\MorAdel{s\beta}$ with the cokernel projection of $\MorAdel{\id_b}$ of that diagram is the zero morphism.
It easily follows that this is only the case for $s \in \{-1,1\}$.
For these cases, we depict a witness pair for being zero in the following diagram:
\begin{center}
  \begin{tikzpicture}[label/.style={postaction={
          decorate,
          decoration={markings, mark=at position .5 with \node #1;}}}, baseline = (A),
          mylabel/.style={thick, draw=none, align=center, minimum width=0.5cm, minimum height=0.5cm,fill=white}]
        \coordinate (r) at (4,0);
        \coordinate (u) at (0,-3);
        
        \node (RB) {};
        \node (B) at ($(RB) + (r)$) {$b \oplus a$};
        \node (CB) at ($(B) + (r)$) {$d \oplus c$};
        
        \node (X) at ($(RB) + (u)$) {$a \oplus b$};
        \node (Y) at ($(X) + (r)$) {$b \oplus c$};

        \draw[->,thick] (B) --node[above]{$\pmattwobytwos{\beta \cdot \gamma}{s\beta}{0}{\alpha \cdot \beta}$} (CB);
        
        \draw[->,thick] (X) --node[above]{$\pmattwobytwos{\alpha}{0}{\id_b}{\beta}$} (Y);

        \draw[->,thick] (B) --node[mylabel]{$\pmattwobytwos{\id_b}{0}{0}{0}$} (Y);
        
        \draw[->,thick, dashed, out = 180, in = 90] (B) to node[mylabel]
        {$\pmattwobytwos{0}{\id_b}{-s\id_a}{s\alpha}$}
        (X);
        
        \draw[->,thick, dashed, out = -90, in = 0] (CB) to node[mylabel]
        {$\pmattwobytwos{0}{0}{0}{-s\id_c}$}
        (Y);
        
  \end{tikzpicture}
\end{center}
\end{computation}

\begin{corollary}
  The snake lemma holds in every abelian category.
\end{corollary}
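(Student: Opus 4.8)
The plan is to run the three-step strategy from the beginning of Section~\ref{section:applications} with the universal instance provided by Lemma~\ref{lemma:universal_snake_lemma}. So let $\BC$ be an arbitrary abelian category and suppose given morphisms $a \xrightarrow{\alpha} b \xrightarrow{\beta} c \xrightarrow{\gamma} d$ in $\BC$ with $\alpha \cdot \beta \cdot \gamma = 0$. As explained directly before Lemma~\ref{lemma:universal_snake_lemma}, this data is the same thing as an additive functor $F \colon \AsCat(Q,R)^{\oplus} \to \BC$, where $Q$ is the quiver $a \to b \to c \to d$ and $R$ is the single relation $\alpha \cdot \beta \cdot \gamma = 0$; explicitly $F$ sends the three arrows of $Q$ to $\alpha$, $\beta$, $\gamma$. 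By Theorem~\ref{theorem:up_of_adel} there is an exact functor $\widehat{F} \colon \Adel(\AsCat(Q,R)^{\oplus}) \to \BC$ with $\widehat{F} \circ \EmbAdelFunctor \simeq F$.

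Next I would apply $\widehat{F}$ to the entire diagram of Figure~\ref{fig:universal_instance_snake}. Since $\widehat{F}$ is exact it preserves commutativity of squares, kernels, cokernels, and exactness of sequences, so its image in $\BC$ is again a commutative diagram with exact rows and exact columns whose blue (snake) sequence is exact. The one point that genuinely needs checking is that this image \emph{is} the snake diagram attached to $\alpha, \beta, \gamma$ in the sense of Lemma~\ref{lemma:snake_rephrased}. For this I would use the explicit description of $\widehat{F}$ from Remark~\ref{remark:explicit_up_induced_fun}, namely $\widehat{F}\,\ObjAdel{r_a}{\rho_a}{a}{\gamma_a}{c_a} = \CH\ObjAdel{Fr_a}{F\rho_a}{Fa}{F\gamma_a}{Fc_a}$, and evaluate it on the objects occurring in the figure. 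Up to canonical isomorphism this gives $\EmbAdel{a} \mapsto a$ (and likewise for $b,c,d$), $\ObjAdel{a}{\alpha}{b}{}{0} \mapsto \cokernel(\alpha)$, $\ObjAdel{0}{}{c}{\gamma}{d} \mapsto \kernel(\gamma)$, and for the six corners of the snake $\ObjAdel{0}{}{a}{\alpha \cdot \beta}{c} \mapsto \kernel(\delta)$, $\ObjAdel{0}{}{b}{\beta}{c} \mapsto \kernel(\beta)$, $\ObjAdel{a}{\alpha}{b}{\beta \cdot \gamma}{d} \mapsto \kernel(\epsilon)$, $\ObjAdel{a}{\alpha \cdot \beta}{c}{\gamma}{d} \mapsto \cokernel(\delta)$, $\ObjAdel{b}{\beta}{c}{}{0} \mapsto \cokernel(\beta)$, $\ObjAdel{b}{\beta \cdot \gamma}{d}{}{0} \mapsto \cokernel(\epsilon)$; the identity morphism data are sent to the evident induced maps, $\MorAdel{\alpha \cdot \beta}$ and $\MorAdel{\beta \cdot \gamma}$ to $\delta$ and $\epsilon$, and $\MorAdel{\beta}$ to the connecting homomorphism $\partial$.

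With these identifications the image under $\widehat{F}$ of Figure~\ref{fig:universal_instance_snake} is exactly the commutative diagram with exact rows together with the exact snake sequence asserted by Lemma~\ref{lemma:snake_rephrased} for the chosen $\alpha, \beta, \gamma$; and by the equivalence of Lemma~\ref{lemma:snake} and Lemma~\ref{lemma:snake_rephrased} noted in the text this is the snake lemma for $\BC$. As $\BC$ was arbitrary, the snake lemma holds in every abelian category. I expect the only real obstacle to be the bookkeeping of the previous paragraph --- verifying that the homology objects produced by $\widehat{F}$ agree, naturally and compatibly with all the structure morphisms, with the kernels and cokernels appearing in the snake lemma --- but this reduces to routine manipulations with kernels and cokernels together with the hypothesis $\alpha \cdot \beta \cdot \gamma = 0$ and the canonical isomorphism of Remark~\ref{remark:homologies}; no input beyond Lemma~\ref{lemma:universal_snake_lemma}, Theorem~\ref{theorem:up_of_adel}, and Remark~\ref{remark:explicit_up_induced_fun} is required.
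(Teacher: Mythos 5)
Your proposal is correct and follows essentially the same route as the paper's own proof: encode $\alpha,\beta,\gamma$ as an additive functor $F$ on $\AsCat(Q,R)^{\oplus}$, invoke the universal property to obtain the exact functor $\widehat{F}$, and apply it to the universal instance of Figure~\ref{fig:universal_instance_snake}. The only difference is that you spell out the object-by-object identifications via Remark~\ref{remark:explicit_up_induced_fun}, which the paper compresses into the single sentence that exactness of $\widehat{F}$ carries the universal instance to the diagram of Lemma~\ref{lemma:snake_rephrased}.
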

\begin{proof}
  Clear from the universal property of Adelman categories.
  For the sake of clarity, we nevertheless provide a more detailed explanation.
  Let $\BC$ be an abelian category with morphisms $\alpha$, $\beta$, $\gamma$
  as in the premise of Lemma \ref{lemma:snake_rephrased}.
  Let $F: \AsCat( Q, R )^{\oplus} \rightarrow \BC$ be the additive functor that is determined (up to natural isomorphism)
  by sending $\alpha$, $\beta$, $\gamma$ in $Q \subseteq \AsCat( Q, R )^{\oplus}$
  to $\alpha$, $\beta$, $\gamma$ in $\BC$.
  Then, by the universal property of Adelman categories, we obtain an exact functor 
  $\widehat{F}: \Adel( \AsCat( Q, R )^{\oplus} ) \rightarrow \BC$.
  Due to its exactness, $\widehat{F}$ sends the universal instance depicted in Figure \ref{fig:universal_instance_snake}
  to the diagram and the snake sequence depicted in Lemma \ref{lemma:snake_rephrased}.
  Thus, the claim holds.
\end{proof}

\begin{remark}[Dowker's explicit formula]\label{remark:concrete_formula}
  The connecting homomorphism in the universal instance together with Remark \ref{remark:explicit_up_induced_fun}
  yields a concrete formula for the connecting homomorphism in any abelian category. It is given by
  \[
    \partial = \CH\left( \ObjAdel{a}{\alpha}{b}{\beta \cdot \gamma}{d} \xrightarrow{\MorAdel{\beta}} \ObjAdel{a}{\alpha \cdot \beta}{c}{\gamma}{d} \right)
  \]
  This construction goes back to Dowker in \cite[Theorem 3]{Dow66}.
  In particular, we see that $\kernel( \epsilon ) \cong \CH\ObjAdel{a}{\alpha}{b}{\beta \cdot \gamma}{d}$
  and $\cokernel( \delta ) \cong \CH\ObjAdel{a}{\alpha \cdot \beta}{c}{\gamma}{d}$.
\end{remark}

\begin{lemma}[Universal uniqueness of the connecting homomorphism]\label{lemma:universal_uniqueness}
  Let $Q$ and $R$ be as in Lemma \ref{lemma:universal_snake_lemma}.
  Then
  \[
    \Hom_{\Adel( \AsCat( Q, R )^{\oplus} )}\left( \ObjAdel{a}{\alpha}{b}{\beta \cdot \gamma}{d}, \ObjAdel{a}{\alpha \cdot \beta}{c}{\gamma}{d} \right) \simeq \Z
  \]
  with generator given by $\MorAdel{\beta}$.
  Moreover, only this generator or its additive inverse render the blue sequence of
  Lemma \ref{lemma:universal_snake_lemma} exact.
\end{lemma}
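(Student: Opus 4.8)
The plan is to compute the displayed $\Hom$-group straight from the definition of morphisms in an Adelman category, and then to deduce the \emph{moreover} part from Computation \ref{computation:test_exactness}. First I would record the relevant $\Hom$-groups of $\AsCat(Q,R)$: since $Q = (a \xrightarrow{\alpha} b \xrightarrow{\beta} c \xrightarrow{\gamma} d)$ is finite and acyclic, each $\Hom$-group is the free $\Z$-module on the paths between the two nodes modulo $R$, and as the only relation $\alpha\cdot\beta\cdot\gamma = 0$ lies in $\Hom(a,d)$ this gives $\Hom(b,c) = \Z\cdot\beta \cong \Z$, $\Hom(a,a) = \Z\cdot\id_a$, $\Hom(d,d) = \Z\cdot\id_d$, $\Hom(a,c) = \Z\cdot(\alpha\cdot\beta)$, $\Hom(b,d) = \Z\cdot(\beta\cdot\gamma)$, and $\Hom(b,a) = \Hom(d,c) = 0$ because there are no backward paths (and passing to the additive closure changes nothing for these single objects). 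A morphism $\ObjAdel{a}{\alpha}{b}{\beta\cdot\gamma}{d} \to \ObjAdel{a}{\alpha\cdot\beta}{c}{\gamma}{d}$ in $\Adel(\AsCat(Q,R)^{\oplus})$ thus has morphism datum some $\phi = n\beta$ with $n \in \Z$, and this is a legitimate datum since one may take $\omega = n\,\id_a$ and $\psi = n\,\id_d$ as relation and corelation witnesses (the two squares read $\alpha\cdot(n\beta) = (n\,\id_a)\cdot(\alpha\cdot\beta)$ and $(n\beta)\cdot\gamma = (\beta\cdot\gamma)\cdot(n\,\id_d)$).

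Next I would collapse the equivalence relation: two data $n\beta$ and $n'\beta$ become equal in $\Adel(\AsCat(Q,R)^{\oplus})$ iff $(n-n')\beta = \sigma_1\cdot(\alpha\cdot\beta) + (\beta\cdot\gamma)\cdot\sigma_2$ for some $\sigma_1 \in \Hom(b,a)$, $\sigma_2 \in \Hom(d,c)$; both groups vanish, so this says $n = n'$ (as $\beta$ has infinite order in $\Hom(b,c) \cong \Z$). Because addition of morphisms in $\Adel(\AC)$ is computed pointwise on morphism data, $\MorAdel{n\beta} = n\cdot\MorAdel{\beta}$, so $n \mapsto \MorAdel{n\beta}$ is a well-defined group isomorphism $\Z \xrightarrow{\,\cong\,} \Hom_{\Adel(\AsCat(Q,R)^{\oplus})}(\ObjAdel{a}{\alpha}{b}{\beta\cdot\gamma}{d}, \ObjAdel{a}{\alpha\cdot\beta}{c}{\gamma}{d})$ with generator $\MorAdel{\beta}$ --- this is the first assertion.

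For the \emph{moreover} statement I would show that among the morphisms $\MorAdel{n\beta}$, $n \in \Z$, exactly $n = \pm 1$ render the blue sequence exact. That $n = 1$ works is Lemma \ref{lemma:universal_snake_lemma}; that $n = -1$ works follows because replacing one morphism of a sequence by its negative alters neither kernels nor images (precomposition with $-\id$ is an isomorphism) and hence preserves exactness at every node. Conversely, if $\MorAdel{n\beta}$ makes the blue sequence exact then it is in particular exact at $\ObjAdel{a}{\alpha}{b}{\beta\cdot\gamma}{d}$, i.e.\ the homology of $\ObjAdel{0}{}{b}{\beta}{c} \xrightarrow{\MorAdel{\id_b}} \ObjAdel{a}{\alpha}{b}{\beta\cdot\gamma}{d} \xrightarrow{\MorAdel{n\beta}} \ObjAdel{a}{\alpha\cdot\beta}{c}{\gamma}{d}$ vanishes; this is exactly the content of Computation \ref{computation:test_exactness} with $s = n$, whose conclusion is that this happens only for $n \in \{-1,1\}$. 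Hence only $\MorAdel{\beta}$ and its additive inverse $-\MorAdel{\beta} = \MorAdel{-\beta}$ qualify.

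The only genuine difficulty --- and thus the \emph{main obstacle} --- is the source/target bookkeeping of the Adelman morphism diagram: one must read off correctly that the morphism datum lives in $\Hom(b,c)$, the two witnesses in $\Hom(a,a)$ and $\Hom(d,d)$, and the witness pair for equality of morphisms in $\Hom(b,a)$ and $\Hom(d,c)$. It is the vanishing of these last two groups that pins the $\Hom$-group down to $\Z$ itself rather than to some proper quotient of it; everything else is the $\Z$-linear algebra already packaged in Example \ref{example:notation}, together with the verification already performed in Computation \ref{computation:test_exactness}.
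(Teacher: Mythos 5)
Your proposal is correct and follows essentially the same route as the paper: the $\Hom$-group is pinned down by observing that every morphism datum is an integer multiple of $\beta$ and that the equivalence relation is trivial because $\Hom(b,a)\simeq\Hom(d,c)\simeq 0$, and the \emph{moreover} part is exactly the content of Computation \ref{computation:test_exactness} run over all $s\in\Z$. Your write-up merely makes explicit a few details the paper leaves implicit (the witnesses $n\,\id_a$, $n\,\id_d$ and the sign-change argument for $n=-1$), which is fine.
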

\begin{proof}
  Clearly, $\MorAdel{\beta}$ generates this homomorphism set. Moreover, there cannot be torsion since
  $\Hom( b, a ) \simeq 0$ and $\Hom( d, c ) \simeq 0$.
  The last assertion follows from Computation \ref{computation:test_exactness} for all cases $s \in \Z$.
\end{proof}

\begin{remark}\label{remark:interpret_universal_uniqueness}
We may interpret Lemma \ref{lemma:universal_uniqueness} as follows:
every ``construction recipe'' of the connecting homomorphism in the snake lemma that works for all abelian categories and
that only uses categorical constructions that are preserved by exact functors (like taking kernels and cokernels)
yields the same morphism $\kernel( \epsilon ) \rightarrow \cokernel( \delta )$ up to isomorphism,
i.e., up to precomposition with an automorphism of $\kernel( \epsilon )$ and postcomposition with an automorphism of $\cokernel( \delta )$.
\end{remark}

\begin{example}\label{example:direct_examples}
The construction in \cite[Chapter VIII, Lemma 5]{MLCWM} has to yield a result equal to
Dowker's concrete formula in Remark \ref{remark:concrete_formula} up to isomorphism.
Moreover, the construction using generalized morphisms (i.e., relations instead of homomorphisms \cite[Chapter 2, Lemma 2.1]{PosurDoktor})
also has to yield the same result.
\end{example}

\begin{example}[Freyd-Mitchell embedding]\label{example:embedding}
  A classical argument for the existence of the connecting homomorphism uses the Freyd-Mitchell embedding theorem \cite{Freyd}:
  every small abelian category $\BC$ admits a full embedding into a module category $R\Modl$ for some ring $R$.
  Within $R\Modl$ the connecting homomorphism $\partial$ can be constructed by the usual chasing of elements \cite{weihom} (and is afterwards pulled back to $\BC$ along the full embedding).
  But now, it is not hard to see that this particular $\partial$ could also be obtained (up to isomorphism) by
  any of the constructions in Example \ref{example:direct_examples} in the context of $R\Modl$ (and thus, due to the full embedding, in the context of $\BC$).
  Thus, the classical argument based on the Freyd-Mitchell embedding yields the same morphism up to isomorphism as all the previously mentioned constructions.
\end{example}

Note that the statement of universal uniqueness does not claim 
that the connecting homomorphism is uniquely determined up to isomorphism if we focus only on a particular instance.

\begin{example}
 In order to create a counterexample for the uniqueness of the connecting homomorphism in a particular instance, it suffices
 to given an example of two non-isomorphic morphisms in an abelian category with the same kernel and the same cokernel.
 Let $E \coloneq \bigwedge\langle e_0, e_1, e_2 \rangle$ be the $\Z$-graded exterior algebra over $\Q$ in $3$ variables with $\deg( e_i ) = 1$ for $i = 0,1,2$.
 We take the quotient algebra $R \coloneq E/\langle e_0 \wedge e_1 \wedge e_2 \rangle$.
 By abuse of notation, we also refer to the residue classes of elements in $R$ by $e_0, e_1, e_2$.

 In this example, we work in the category of $\Z$-graded left modules over $R$.
 We define the submodule
 \[
   M \coloneq \langle e_0, e_1, e_2 \rangle \hookrightarrow R
 \]
 and its graded shift $M(1)$ with graded parts $M(1)_i \coloneq M_{i+1}$ for $i \in \Z$.
 We have 
 \[ \Hom_E( M, M(1) ) \cong \Q^{3 \times 3}\]
 since we may map each $e_i \in M$ to any linear combination of the monomials 
 $e_0 \wedge e_1$, $e_0 \wedge e_2$, $e_1 \wedge e_2$ in $M(1)$.
 If $\beta \in \Hom_E( M, M(1) )$ corresponds to an invertible matrix in $\Q^{3 \times 3}$,
 then
 \[
   \kernel( \beta ) = \langle e_0 \wedge e_1, e_0 \wedge e_2, e_1 \wedge e_2 \rangle
 \]
 and
 \[
   \cokernel( \beta ) = \frac{M}{\langle e_0 \wedge e_1, e_0 \wedge e_2, e_1 \wedge e_2 \rangle}(1).
 \]
 It follows that if $\beta \in \Hom_E( M, M(1) )$ corresponds to the identity $3\times 3$ matrix,
 and if $\beta' \in \Hom_E( M, M(1) )$ corresponds any non-trivial permutation matrix, then the following holds:
 \begin{enumerate}
   \item The sequences
   \[
    0 \rightarrow \kernel(\beta) \rightarrow M \xrightarrow{\beta} M(1) \rightarrow \cokernel( \beta ) \rightarrow 0
  \]
  and
  \[
    0 \rightarrow \kernel(\beta) \rightarrow M \xrightarrow{\beta'} M(1) \rightarrow \cokernel( \beta ) \rightarrow 0
  \]
  are exact, since $\beta$ and $\beta'$ have equal kernels and cokernels.
  \item There are no automorphisms $\sigma \in \Aut_E(M)$ and $\tau \in \Aut_E( M(1) )$ such that
  \begin{center}
  \begin{tikzpicture}[label/.style={postaction={
    decorate,
    decoration={markings, mark=at position .5 with \node #1;}}}, baseline = (A),
    mylabel/.style={thick, draw=none, align=center, minimum width=0.5cm, minimum height=0.5cm,fill=white}]
  \coordinate (r) at (4,0);
  \coordinate (d) at (0,1.5);
  
  \node (A) {$M$};
  \node (B) at ($(A) + (r)$) {$M(1)$};
  \node (C) at ($(A) + (d)$) {$M$};
  \node (D) at ($(C) + (r)$) {$M(1)$};
  
  \draw[->,thick] (A) --node[above]{$\beta$} (B);
  \draw[->,thick] (C) --node[left]{$\sigma$} (A);
  \draw[->,thick] (D) --node[right]{$\tau$} (B);
  \draw[->,thick] (C) --node[above]{$\beta'$} (D);
  \end{tikzpicture}
  \end{center}
  commutes, since such automorphisms can only send each $e_i$ to a scalar multiple.
 \end{enumerate}
 It follows that for the triple of morphisms
 \[
   0 \rightarrow M \xrightarrow{\beta} M(1) \rightarrow 0
 \]
 in the premise of the snake lemma \ref{lemma:snake_rephrased}, there are non-isomorphic connecting homomorphisms that satisfy the conclusion.
\end{example}

\subsection{The universal instance of a refinement of the 5-lemma}\label{subsection:five_lemma}
In this subsection, we discuss the following version of the 5-lemma.
\begin{lemma}[5-lemma]\label{lemma:five}
  Suppose given the following commutative diagram in an abelian category:
  \begin{center}
    \begin{tikzpicture}[label/.style={postaction={
            decorate,
            decoration={markings, mark=at position .5 with \node #1;}}}, baseline = (A),
            mylabel/.style={thick, draw=none, align=center, minimum width=0.5cm, minimum height=0.5cm,fill=white}]
          \coordinate (r) at (4,0);
          \coordinate (d) at (0,-1.5);
          
          \node (A) {$a$};
          \node (B) at ($(A) + (r)$) {$b$};
          \node (C) at ($(B) + (r)$) {$c$};
          \node (D) at ($(C) + (r)$) {$d$};
          
          \node (E) at ($(A) + (d)$){$e$};
          \node (F) at ($(B) + (d)$) {$f$};
          \node (G) at ($(C) + (d)$) {$g$};
          \node (H) at ($(D) + (d)$) {$h$};
          
          \draw[->,thick] (A) --node[above]{$\alpha$} (B);
          \draw[->,thick] (B) --node[above]{$\beta$} (C);
          \draw[->,thick] (C) --node[above]{$\gamma$} (D);
          
          \draw[->>,thick] (A) --node[left]{$\delta$} (E);
          \draw[right hook->,thick] (B) --node[left]{$\epsilon$} (F);
          \draw[->,thick] (C) --node[left]{$\zeta$} (G);
          \draw[right hook->,thick] (D) --node[left]{$\eta$} (H);
          
          \draw[->,thick] (E) --node[above]{$\theta$} (F);
          \draw[->,thick] (F) --node[above]{$\iota$} (G);
          \draw[->,thick] (G) --node[above]{$\kappa$} (H);
          
    \end{tikzpicture}
  \end{center}
  If the composition of any two consecutive horizontal morphisms is zero, 
  $\delta$ is an epi, $\eta$ and $\epsilon$ are monos,
  and if we have exactness at $c$ and $f$,
  then $\zeta$ is a mono.
\end{lemma}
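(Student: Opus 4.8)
The plan is to run the universal-instance strategy of Section~\ref{section:applications}, adapted to the obstacle that distinguishes this lemma from the snake lemma: the five ``soft'' hypotheses ($\delta$ epi, $\epsilon$ and $\eta$ mono, exactness at $c$ and at $f$) are not expressible by relations of a quiver, so they cannot be built into a combinatorial universal instance the way ``exact rows'' could for the snake lemma. Let $Q$ be the quiver underlying the displayed $4\times 2$ grid and let $R$ consist of the four relations saying that two consecutive horizontal morphisms compose to zero, together with the three relations saying that the three squares commute. Then the premise amounts to an additive functor $F\colon\AsCat(Q,R)^{\oplus}\to\BC$ (cf.\ Example~\ref{example:notation}), \emph{plus} the extra information that $F$ sends $\delta$ to an epi, $\epsilon$ and $\eta$ to monos, and makes the two inner rows exact. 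By Theorem~\ref{theorem:up_of_adel} we obtain an exact functor $\widehat{F}\colon\Adel(\AsCat(Q,R)^{\oplus})\to\BC$ with $\widehat{F}\circ\EmbAdelFunctor\simeq F$; since exact functors preserve kernels, it suffices to prove $\widehat{F}\big(\kernel(\EmbAdelFunctor(\zeta))\big)=0$.

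The next step is to transport both sides into the language of $\Adel(\AsCat(Q,R)^{\oplus})$. Construction~\ref{construction:kernels} identifies $\kernel(\EmbAdelFunctor(\zeta))\cong\ObjAdel{0}{}{c}{\zeta}{g}$, and, using Remark~\ref{remark:explicit_up_induced_fun} together with the identification $\CH\big(\EmbAdel{u}\xrightarrow{\MorAdel{\rho}}\EmbAdel{v}\xrightarrow{\MorAdel{\sigma}}\EmbAdel{w}\big)\cong\ObjAdel{u}{\rho}{v}{\sigma}{w}$, each soft hypothesis says exactly that $\widehat{F}$ annihilates one specific Adelman object: exactness at $c$ the object $\ObjAdel{b}{\beta}{c}{\gamma}{d}$, exactness at $f$ the object $\ObjAdel{e}{\theta}{f}{\iota}{g}$, ``$\delta$ epi'' the object $\ObjAdel{a}{\delta}{e}{}{0}$, ``$\epsilon$ mono'' the object $\ObjAdel{0}{}{b}{\epsilon}{f}$, and ``$\eta$ mono'' the object $\ObjAdel{0}{}{d}{\eta}{h}$. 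Thus the whole content of the lemma reduces to a statement purely inside $\Adel(\AsCat(Q,R)^{\oplus})$ --- the genuine universal instance --- namely that $\ObjAdel{0}{}{c}{\zeta}{g}$ lies in the Serre subcategory generated by those five objects; the soft hypotheses then enter only at the very end, when $\widehat{F}$ is applied.

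To establish this last claim I would exhibit a finite filtration of $\ObjAdel{0}{}{c}{\zeta}{g}$ whose successive subquotients lie in the Serre subcategory generated by the five objects above, obtained by repeatedly applying Constructions~\ref{construction:cokernels} and~\ref{construction:kernels} and the dual construction (monos as kernels of their cokernels). The shape of the filtration is dictated by the classical diagram chase for the sharp four lemma, reorganized layer by layer: the first layer records $\kernel(\zeta)\le\image(\beta)$ (using $\eta$ mono and exactness at $c$, hence the objects $\ObjAdel{0}{}{d}{\eta}{h}$ and $\ObjAdel{b}{\beta}{c}{\gamma}{d}$), the second records that the $\beta$-preimage of $\kernel(\zeta)$ lies in $\image(\alpha)$ (using exactness at $f$, $\delta$ epi and $\epsilon$ mono, hence the remaining three objects), and the final equality $\alpha\cdot\beta=0$ is already a relation of $R$, hence free. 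Applying $\widehat{F}$ to the filtration and using that it kills every subquotient gives $\kernel(\zeta)=\widehat{F}\big(\kernel(\EmbAdelFunctor(\zeta))\big)=0$, so $\zeta$ is a mono; the classical 5-lemma is the special case of fully exact rows.

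I expect the main difficulty to be conceptual, not computational. For the snake lemma one could package everything into the single commuting diagram of Figure~\ref{fig:universal_instance_snake}; here no such diagram can encode ``epi'', ``mono'' and ``exact'', so one must accept that the universal instance is a Serre-subcategory membership assertion rather than a diagram chase inside an Adelman category. Once that reformulation is in place, the residual work is bookkeeping: writing down, in $\AsCat(Q,R)^{\oplus}$, the morphism data and witness pairs that verify each layer of the filtration, and checking that precisely the five anticipated subquotients occur --- which is also what makes the refinement a genuine generalization of the classical 5-lemma rather than a mere reproof.
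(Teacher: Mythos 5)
Your reduction is sound as far as it goes: encoding the grid and its relations into $\AsCat(Q,R)^{\oplus}$, identifying each soft hypothesis with the vanishing under $\widehat{F}$ of one of the five Adelman objects $\ObjAdel{b}{\beta}{c}{\gamma}{d}$, $\ObjAdel{e}{\theta}{f}{\iota}{g}$, $\ObjAdel{a}{\delta}{e}{}{0}$, $\ObjAdel{0}{}{b}{\epsilon}{f}$, $\ObjAdel{0}{}{d}{\eta}{h}$, and observing that it then suffices to show that $\kernel(\MorAdel{\zeta})\cong\ObjAdel{0}{}{c}{\zeta}{g}$ lies in the Serre subcategory $\CC$ generated by these five objects. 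This is precisely the reformulation given in Section~\ref{section:outlook}, where it is flagged as a direction for future work rather than carried out: the paper notes that a constructive treatment of such Serre quotient categories ``appears to be desirable.'' The genuine gap in your proposal is that the one step carrying all the mathematical content --- exhibiting a finite filtration of $\ObjAdel{0}{}{c}{\zeta}{g}$ whose subquotients are subquotients of the five generators, complete with explicit morphism data and witness pairs in $\AsCat(Q,R)^{\oplus}$ --- is declared to be ``bookkeeping'' and never performed. Note that the membership $\ObjAdel{0}{}{c}{\zeta}{g}\in\CC$ is essentially equivalent to the 5-lemma holding in the abelian quotient $\Adel(\AsCat(Q,R)^{\oplus})/\CC$, so gesturing at the shape of the classical diagram chase without translating it into Adelman-category data verifies nothing; the translation is exactly where the difficulty lives.

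The paper sidesteps this by a different route. It first proves a \emph{refined} 5-lemma (Lemma~\ref{lemma:five_refined}) in which two of the five soft hypotheses, $\delta$ epi and $\eta$ mono, are made encodable into the quiver itself by adjoining arrows $\lambda\colon i\to a$ and $\mu\colon h\to j$ and realizing $\delta$ as a cokernel projection and $\eta$ as a kernel embedding inside the Adelman category. The conclusion is then not a Serre-membership assertion but a concrete monomorphism
\[
\CH\big(\kernel(\epsilon)\to\kernel(\zeta)\to\CH(\beta,\gamma)\big)\hookrightarrow\cokernel\big(\CH(\alpha,\beta)\to\CH(\theta,\iota)\big),
\]
verified in Lemma~\ref{lemma:universal_instace_5lem} by explicit witness-pair computations; the classical statement is recovered as the special case in which the three remaining objects vanish. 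To complete your version you would have to either produce the filtration explicitly or adopt the paper's restructuring.
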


Since it is not obvious how to encode the premise of the $5$-lemma as an additive functor,
the first challenge is to refine the lemma in a way such that such an encoding becomes possible.
In contrast to the snake lemma, we will not only refine the premise, but also the conclusion of the $5$-lemma.

\begin{lemma}[Refined 5-lemma]\label{lemma:five_refined}
  Suppose given the following commutative diagram in an abelian category:
  \begin{center}
    \begin{tikzpicture}[label/.style={postaction={
            decorate,
            decoration={markings, mark=at position .5 with \node #1;}}}, baseline = (A),
            mylabel/.style={thick, draw=none, align=center, minimum width=0.5cm, minimum height=0.5cm,fill=white}]
          \coordinate (r) at (4,0);
          \coordinate (d) at (0,-1.5);
          
          \node (A) {$a$};
          \node (B) at ($(A) + (r)$) {$b$};
          \node (C) at ($(B) + (r)$) {$c$};
          \node (D) at ($(C) + (r)$) {$d$};
          
          \node (E) at ($(A) + (d)$){$e$};
          \node (F) at ($(B) + (d)$) {$f$};
          \node (G) at ($(C) + (d)$) {$g$};
          \node (H) at ($(D) + (d)$) {$h$};
          
          \draw[->,thick] (A) --node[above]{$\alpha$} (B);
          \draw[->,thick] (B) --node[above]{$\beta$} (C);
          \draw[->,thick] (C) --node[above]{$\gamma$} (D);
          
          \draw[->>,thick] (A) --node[left]{$\delta$} (E);
          \draw[->,thick] (B) --node[left]{$\epsilon$} (F);
          \draw[->,thick] (C) --node[left]{$\zeta$} (G);
          \draw[right hook->,thick] (D) --node[left]{$\eta$} (H);
          
          \draw[->,thick] (E) --node[above]{$\theta$} (F);
          \draw[->,thick] (F) --node[above]{$\iota$} (G);
          \draw[->,thick] (G) --node[above]{$\kappa$} (H);
          
    \end{tikzpicture}
  \end{center}
  If the composition of any two consecutive horizontal morphisms is zero, $\delta$ is an epi, and $\eta$ is a mono,
  then we have a monomorphism
  \[
    \CH\big(\kernel( \epsilon ) \rightarrow \kernel( \zeta ) \rightarrow \CH( \beta, \gamma ) \big)
    \hookrightarrow
    \cokernel\big( \CH( {\alpha}, {\beta} ) \xrightarrow{\CH( \epsilon ) } \CH( {\theta}, {\iota} ) \big).
  \]
  In particular, $\kernel( \zeta )$ admits a filtration whose graded parts are given by subquotients of the three objects
  \[
    \kernel( \epsilon ), \CH( \beta, \gamma ), \CH( \theta, \iota ).
  \]
  In particular, if these three objects are zero (i.e., if $\epsilon$ is monic, the diagram is exact at $c$ and $f$), then $\zeta$ is a mono.
\end{lemma}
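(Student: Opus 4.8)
The plan is to prove Lemma \ref{lemma:five_refined} by exhibiting a \emph{universal instance} of it inside an Adelman category and then transporting the conclusion along the universal property of Theorem \ref{theorem:up_of_adel}, following the three-step strategy set out at the start of Section \ref{section:applications} and already carried out for the snake lemma. First I would fix the additive category encoding the premise. Let $Q$ be the quiver with nodes $a,b,c,d,e,f,g,h$ and arrows $\alpha\colon a\to b$, $\beta\colon b\to c$, $\gamma\colon c\to d$, $\theta\colon e\to f$, $\iota\colon f\to g$, $\kappa\colon g\to h$, $\delta\colon a\to e$, $\epsilon\colon b\to f$, $\zeta\colon c\to g$, $\eta\colon d\to h$, and let $R$ consist of the four relations $\alpha\cdot\beta=0$, $\beta\cdot\gamma=0$, $\theta\cdot\iota=0$, $\iota\cdot\kappa=0$ together with the three commutativity relations $\delta\cdot\theta=\alpha\cdot\epsilon$, $\epsilon\cdot\iota=\beta\cdot\zeta$, $\zeta\cdot\kappa=\gamma\cdot\eta$. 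Set $\AC:=\AsCat(Q,R)^{\oplus}$. Then an additive functor $F\colon\AC\to\BC$ corresponds, up to natural isomorphism, to a commutative diagram of the shape of Lemma \ref{lemma:five_refined} with vanishing consecutive horizontal compositions, and by Theorem \ref{theorem:Adelman_computable} together with Example \ref{example:notation} the category $\Adel(\AC)$ is computable abelian.

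The main obstacle is conceptual rather than computational: the two remaining hypotheses ``$\delta$ is an epi'' and ``$\eta$ is a mono'' cannot be imposed as quiver relations, so the canonical embedding $\EmbAdelFunctor$ does not by itself produce them. The resolution is to build them into the universal diagram geometrically: replace the object $\EmbAdel{e}$ by $\image(\EmbAdel{\delta})$ and the object $\EmbAdel{d}$ by $\image(\EmbAdel{\eta})$, take the coimage projection $\EmbAdel{a}\twoheadrightarrow\image(\EmbAdel{\delta})$ (an epi) and the image inclusion $\image(\EmbAdel{\eta})\hookrightarrow\EmbAdel{h}$ (a mono) as the new vertical arrows in positions $\delta$ and $\eta$, and compose $\image(\EmbAdel{\delta})\hookrightarrow\EmbAdel{e}\xrightarrow{\EmbAdel{\theta}}\EmbAdel{f}$ and $\EmbAdel{c}\xrightarrow{\EmbAdel{\gamma}}\EmbAdel{d}\twoheadrightarrow\image(\EmbAdel{\eta})$ for the two horizontal arrows that changed a source or a target. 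Using the commutativity relations in $R$ one checks, purely on the level of morphism data, that the modified square diagram still commutes and that consecutive horizontal compositions still vanish. Inside the computable category $\Adel(\AC)$ one then forms, via Construction \ref{construction:cokernels}, Construction \ref{construction:kernels} and the homology functor of Remark \ref{remark:homologies}, the objects $\kernel(\EmbAdel{\epsilon})$, $\kernel(\EmbAdel{\zeta})$, $\CH(\EmbAdel{\beta},\EmbAdel{\gamma})$, $\CH(\EmbAdel{\alpha},\EmbAdel{\beta})$ and $\CH(\bar\theta,\EmbAdel{\iota})$ (where $\bar\theta$ denotes the modified arrow out of $\image(\EmbAdel{\delta})$), together with the canonical comparison morphism
\[
 \CH\big(\kernel(\EmbAdel{\epsilon})\to\kernel(\EmbAdel{\zeta})\to\CH(\EmbAdel{\beta},\EmbAdel{\gamma})\big)\longrightarrow\cokernel\big(\CH(\EmbAdel{\alpha},\EmbAdel{\beta})\xrightarrow{\CH(\EmbAdel{\epsilon})}\CH(\bar\theta,\EmbAdel{\iota})\big),
\]
and verifies, by an explicit finite computation of a witness pair in the spirit of Computation \ref{computation:test_exactness}, that its kernel is zero, i.e.\ that it is a monomorphism. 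This last step is where decidability of homotopy equations in $\AsCat(Q,R)^{\oplus}$ is actually used.

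Finally I would transport the statement. Given any premise diagram in an abelian category $\BC$ with $\delta$ epi and $\eta$ mono, pick $F\colon\AC\to\BC$ sending the arrows of $Q$ to the given morphisms; Theorem \ref{theorem:up_of_adel} produces an exact functor $\widehat{F}\colon\Adel(\AC)\to\BC$ with $\widehat{F}\circ\EmbAdelFunctor\simeq F$. Since $\widehat{F}$ is exact it preserves kernels, cokernels, images, the homology functor $\CH$ and monomorphisms; and since $F\delta$ is epi we get $\widehat{F}(\image(\EmbAdel{\delta}))\cong\image(F\delta)\cong e$, while $F\eta$ mono gives $\widehat{F}(\image(\EmbAdel{\eta}))\cong\image(F\eta)\cong d$. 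Hence $\widehat{F}$ carries the modified universal diagram, up to isomorphism, onto the given premise diagram, and carries the monomorphism constructed above onto the asserted monomorphism $\CH(\kernel(\epsilon)\to\kernel(\zeta)\to\CH(\beta,\gamma))\hookrightarrow\cokernel(\CH(\alpha,\beta)\xrightarrow{\CH(\epsilon)}\CH(\theta,\iota))$ in $\BC$. The two ``in particular'' clauses are then formal: the left-hand object is by construction the middle graded piece of the three-step filtration of $\kernel(\zeta)$ by $\image(\kernel(\epsilon)\to\kernel(\zeta))$ and $\kernel(\kernel(\zeta)\to\CH(\beta,\gamma))$, whose lower piece is a quotient of $\kernel(\epsilon)$, whose top piece is a subobject of $\CH(\beta,\gamma)$, and whose middle piece embeds into $\cokernel(\CH(\alpha,\beta)\to\CH(\theta,\iota))$, hence a subquotient of $\CH(\theta,\iota)$; if all three objects vanish the filtration collapses, forcing $\kernel(\zeta)=0$. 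I expect essentially all of the genuine work to lie in the second step --- getting the replacement-object construction exactly right and grinding out the witness pair certifying injectivity of the comparison morphism; everything else is a formal consequence of the universal property.
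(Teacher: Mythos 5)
Your overall route is the same as the paper's --- build a universal instance of the premise in an Adelman category, verify the conclusion there by explicit computation, and transport it along the exact functor $\widehat{F}$ of Theorem \ref{theorem:up_of_adel} --- but your encoding of the two non-functorial hypotheses differs genuinely from the paper's. You keep all eight nodes of the original diagram in the quiver and then repair ``$\delta$ epi'' and ``$\eta$ mono'' inside $\Adel(\AsCat(Q,R)^{\oplus})$ by replacing $\EmbAdel{e}$ and $\EmbAdel{d}$ with $\image(\EmbAdelFunctor\delta)$ and $\image(\EmbAdelFunctor\eta)$; the transport then works because an exact functor preserves images and $\image(F\delta)\cong e$, $\image(F\eta)\cong d$ under the hypotheses. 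The paper instead deletes $e$, $d$, $\delta$, $\eta$, $\theta$, $\gamma$ from the quiver and adjoins auxiliary arrows $\lambda\colon i\to a$ and $\mu\colon h\to j$, realizing $e$ as the cokernel object $\ObjAdel{i}{\lambda}{a}{}{0}$ and $d$ as the kernel object $\ObjAdel{0}{}{h}{\mu}{j}$, so that the epi and the mono are literally a cokernel projection and a kernel embedding of the canonical form of Constructions \ref{construction:cokernels} and \ref{construction:kernels}. Both encodings are legitimate; the paper's buys much smaller and more explicit Adelman objects (every object in its universal diagram is a composable pair of generating arrows), whereas yours stays closer to the shape of Lemma \ref{lemma:five_refined} at the cost of working with image objects of morphisms in $\Adel$, which are themselves nontrivial composable pairs and make the subsequent matrix computations heavier. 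Your filtration argument deducing the two ``in particular'' clauses is correct and matches the paper's intent.

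The genuine deficiency is that the decisive content is deferred rather than supplied. You never actually construct the comparison morphism
$\CH\big(\kernel(\epsilon)\to\kernel(\zeta)\to\CH(\beta,\gamma)\big)\to\cokernel\big(\CH(\alpha,\beta)\to\CH(\theta,\iota)\big)$
--- calling it ``canonical'' hides real work, since one must exhibit concrete morphism data and witnesses identifying the two homology objects and the cokernel with explicit composable pairs before one can even state the map --- and you do not carry out the witness-pair computation certifying that it is monic. In the paper's proof of Lemma \ref{lemma:universal_instace_5lem} these two items (the four identification steps and the final explicit witness pair for the vanishing of the identity of the kernel object) constitute essentially the entire argument; nothing else in the proof is nonformal. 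So the proposal is a correct and workable plan, but as a proof it is incomplete precisely where the paper's proof has its substance.
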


Note that, since $\delta$ is an epimorphism, it has to be the cokernel of some morphism $\lambda$.
Similarly, since $\eta$ is a monomorphism, it has to be the kernel of some morphism $\mu$.
These two observation suffice in order to encode the premise of Lemma \ref{lemma:five_refined} as an additive functor, as we will see in the next lemma.

\begin{lemma}[Universal instance of the refined 5-lemma]\label{lemma:universal_instace_5lem}
  Let $Q$ denote the quiver given by
  \begin{center}
    \begin{tikzpicture}[label/.style={postaction={
      decorate,
      decoration={markings, mark=at position .5 with \node #1;}},
      mylabel/.style={thick, draw=none, align=center, minimum width=0.5cm, minimum height=0.5cm,fill=white}}, scale = 1]
          \coordinate (r) at (2,0);
          \coordinate (d) at (0,-2);
          
          {
          \node (I) {$i$};
          \node (A) at ($(I) + (d)$) {$a$};
          \node (B) at ($(A) + (r)$) {$b$};
          \node (C) at ($(B) + (r)$) {$c$};
          
          \node (F) at ($(B) + (d)$) {$f$};
          \node (G) at ($(F) + (r)$) {$g$};
          \node (H) at ($(G) + (r)$) {$h$};
          \node (J) at ($(H) + (d)$) {$j$};
          
          \draw[->,thick] (I) to node[left]{$\lambda$} (A);

          \draw[->,thick] (A) to node[above]{$\alpha$} (B);
          \draw[->,thick] (B) to node[above]{$\beta$} (C);
          
          \draw[->,thick] (B) to node[left]{$\epsilon$} (F);
          \draw[->,thick] (C) to node[left]{$\zeta$} (G);
          
          \draw[->,thick] (F) to node[above]{$\iota$} (G);
          \draw[->,thick] (G) to node[above]{$\kappa$} (H);
          
          \draw[->,thick] (H) to node[right]{$\mu$} (J);
          }
          
    \end{tikzpicture}
  \end{center}
  and let $R$ be given by the relations
  \[
    \alpha \cdot \beta = 0, \hspace{1em}\iota \cdot \kappa = 0, \hspace{1em}\beta \cdot \zeta = \epsilon \cdot \iota.
  \]
  Then we get a commutative diagram in $\Adel( \AsCat( Q, R )^{\oplus} )$
  \begin{center}
    \begin{tikzpicture}[label/.style={postaction={
            decorate,
            decoration={markings, mark=at position .5 with \node #1;}}}, baseline = (A),
            mylabel/.style={thick, draw=none, align=center, minimum width=0.5cm, minimum height=0.5cm,fill=white}]
          \coordinate (r) at (4,0);
          \coordinate (u) at (0,-3);
          
          \node (A) {$\EmbAdel{a}$};
          \node (B) at ($(A) + (r)$) {$\EmbAdel{b}$};
          \node (C) at ($(B) + (r)$) {$\EmbAdel{c}$};
          \node (D) at ($(C) + (r)$) {$\ObjAdel{0}{}{h}{\mu}{j}$};
          
          \node (E) at ($(A) + (d)$){$\ObjAdel{i}{\lambda}{a}{}{0}$};
          \node (F) at ($(B) + (d)$) {$\EmbAdel{f}$};
          \node (G) at ($(C) + (d)$) {$\EmbAdel{g}$};
          \node (H) at ($(D) + (d)$) {$\EmbAdel{h}$};
          
          \draw[->,thick] (A) --node[above]{$\MorAdel{\alpha}$} (B);
          \draw[->,thick] (B) --node[above]{$\MorAdel{\beta}$} (C);
          \draw[->,thick] (C) --node[above]{$\MorAdel{\zeta \cdot \kappa}$} (D);
          
          \draw[->>,thick] (A) --node[left]{$\delta := \MorAdel{\id_a}$} (E);
          \draw[->,thick] (B) --node[left]{$\MorAdel{\epsilon}$} (F);
          \draw[->,thick] (C) --node[left]{$\MorAdel{\zeta}$} (G);
          \draw[right hook->,thick] (D) --node[right]{$\MorAdel{\id_h} =:\eta$} (H);
          
          \draw[->,thick] (E) --node[above]{$\MorAdel{\alpha \cdot \epsilon}$} (F);
          \draw[->,thick] (F) --node[above]{$\MorAdel{\iota}$} (G);
          \draw[->,thick] (G) --node[above]{$\MorAdel{\kappa}$} (H);
          
    \end{tikzpicture}
  \end{center}
  that satisfies the premise and the conclusion of Lemma \ref{lemma:five_refined}.
\end{lemma}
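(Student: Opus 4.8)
The plan is to check, in turn: (a) the displayed diagram consists of well-defined morphisms of $\Adel(\AsCat(Q,R)^{\oplus})$ and commutes; (b) it satisfies the premise of Lemma~\ref{lemma:five_refined}; (c) it satisfies the conclusion of Lemma~\ref{lemma:five_refined}. Steps (a) and (b) are essentially formal. Commutativity of each of the three squares already holds on the level of morphism data — the two composites around them are $\alpha\cdot\epsilon$, $\beta\cdot\zeta = \epsilon\cdot\iota$ (the middle identity being one of the defining relations of $R$) and $\zeta\cdot\kappa$ — and well-definedness of each morphism is a direct check with $R$ (for the four embedded morphisms there is nothing to check, and the remaining relation/corelation witnesses are either zero or dictated by $R$). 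For the premise, the consecutive horizontal composites in the top row have morphism data $\alpha\cdot\beta = 0$ and $\beta\cdot\zeta\cdot\kappa = \epsilon\cdot\iota\cdot\kappa = 0$, and those in the bottom row have data $\alpha\cdot\epsilon\cdot\iota = \alpha\cdot\beta\cdot\zeta = 0$ and $\iota\cdot\kappa = 0$, all consequences of $R$, so these composites vanish; moreover, after collapsing the zero direct summands, Construction~\ref{construction:cokernels} identifies $\delta = \MorAdel{\id_a}$ with the cokernel projection of $\MorAdel{\lambda}\colon \EmbAdel{i} \to \EmbAdel{a}$, so $\delta$ is an epimorphism, and dually (Construction~\ref{construction:kernels} and Remark~\ref{remark:adel_duality}) $\eta = \MorAdel{\id_h}$ is the kernel embedding of $\MorAdel{\mu}\colon \EmbAdel{h} \to \EmbAdel{j}$, hence a monomorphism.

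The substantial step is (c), and here I would exploit computability. Since $Q$ is finite and acyclic, Example~\ref{example:notation} guarantees that $\Adel(\AsCat(Q,R)^{\oplus})$ is a computable abelian category, so all objects and morphisms occurring in the conclusion of Lemma~\ref{lemma:five_refined} can be produced explicitly. Using Constructions~\ref{construction:cokernels} and~\ref{construction:kernels} together with the homology description of Remark~\ref{remark:homologies}, I would compute $\kernel(\MorAdel{\epsilon})$, the induced maps $\kernel(\MorAdel{\epsilon}) \to \kernel(\MorAdel{\zeta}) \to \CH(\MorAdel{\beta}, \MorAdel{\zeta\cdot\kappa})$, and then the homology $P$ of this composable pair; and likewise the cokernel object $C$ of $\CH(\MorAdel{\alpha},\MorAdel{\beta}) \xrightarrow{\CH(\MorAdel{\epsilon})} \CH(\MorAdel{\alpha\cdot\epsilon},\MorAdel{\iota})$. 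Each of $P$, $C$ is a composable pair of direct sums whose structure morphisms are assembled out of $\alpha$- and $\beta$-type blocks. I would then write down the comparison morphism $P \to C$ — whose morphism datum is again built from the structure morphisms of the diagram — and prove it is a monomorphism via the remark following Theorem~\ref{theorem:Adelman_computable}: it suffices that the identity of $\kernel(P \to C)$ be zero, which in $\AsCat(Q,R)^{\oplus}$ unwinds, by Example~\ref{example:notation}, to a finite linear system over $\Z$ that is certified by exhibiting an explicit witness pair, i.e.\ a homotopy, exactly in the spirit of Computation~\ref{computation:test_exactness}. Once the monomorphism $P \hookrightarrow C$ is in hand, the two ``in particular'' assertions of Lemma~\ref{lemma:five_refined} — the filtration of $\kernel(\MorAdel{\zeta})$ with graded parts among $\kernel(\MorAdel{\epsilon})$, $\CH(\MorAdel{\beta},\MorAdel{\zeta\cdot\kappa})$, $\CH(\MorAdel{\alpha\cdot\epsilon},\MorAdel{\iota})$, and its consequence for $\MorAdel{\zeta}$ — follow formally, exactly as in the statement.

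I expect the main obstacle to be this last step: the block matrices describing $P$ and $C$ are sizeable, and the delicate part is pinning down the correct comparison morphism $P \to C$ and then producing the homotopy witnessing its injectivity — bookkeeping most safely organized as an explicit computation of the type of Computation~\ref{computation:test_exactness}, or delegated to a \CapPkg-style implementation of the computable Adelman category. Since the whole verification is a terminating, finitary calculation, once it is done the refined 5-lemma in an arbitrary abelian category follows at once from the universal property of the Adelman category (Theorem~\ref{theorem:up_of_adel}), because an exact functor preserves kernels, cokernels, homology and monomorphisms.
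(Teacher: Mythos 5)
Your outline follows the paper's proof step for step: parts (a) and (b) are exactly the paper's opening observations (commutativity on the level of morphism data, vanishing of consecutive horizontal composites via the relations in $R$, and the identification of $\delta$ and $\eta$ as a cokernel projection and a kernel embedding respectively), and your part (c) is precisely the paper's four-step computation — identify $\CH(\MorAdel{\beta},\MorAdel{\zeta\cdot\kappa})$ with $\ObjAdel{b}{\beta}{c}{\zeta\cdot\kappa}{h}$, compute the homology object $P$ of $\kernel(\MorAdel{\epsilon})\to\kernel(\MorAdel{\zeta})\to\CH(\MorAdel{\beta},\MorAdel{\zeta\cdot\kappa})$, compute the cokernel object $C$, and verify that the comparison morphism $P\to C$ is a monomorphism by showing the identity of its kernel object is zero via a witness pair.

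The one genuine gap is that you stop exactly where the paper's proof does its real work: you never exhibit the comparison morphism $P\to C$ (whose morphism datum $\pmatthreebytwo{0}{\id_c}{\id_f}{0}{\epsilon}{\beta}$ is not entirely obvious, and whose corelation witness requires a sign) nor the witness pair certifying that $\kernel(P\to C)\cong 0$. Appealing to computability here proves too little: Example \ref{example:notation} guarantees that the homotopy equation is \emph{decidable}, i.e.\ that the finite linear system over $\Z$ either has a solution or provably does not — it does not guarantee the answer comes out in your favor. (Compare Computation \ref{computation:test_exactness}, where the analogous check \emph{fails} for all $s\notin\{-1,1\}$.) So "the verification is a terminating, finitary calculation" is not yet a proof of the lemma; the proof is the explicit certificate, which in the paper takes the form of the two dashed matrices in step $(4)$. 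Either carry out that computation by hand or actually run the implementation and record the resulting witness; until then the conclusion of Lemma \ref{lemma:five_refined} for this instance is asserted, not established.
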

\begin{proof}
  The outer left morphism is an epi since it is constructed as a cokernel projection (see Construction \ref{construction:cokernels}),
  dually, the outer right morphism is a mono.
  In order reach the conclusion of Lemma \ref{lemma:five_refined},
  it suffices to show the following facts step by step using basic computations within $\Adel( \AsCat( Q, R )^{\oplus} )$:
  \begin{enumerate}
    \item We have an isomorphism $\CH( \MorAdel{\beta}, \MorAdel{\zeta \cdot \kappa }) \simeq \ObjAdel{b}{\beta}{c}{\zeta \cdot \kappa}{h}$.
    \item  We have a well-defined sequence
    \[
      \ObjAdel{0}{}{b}{\epsilon}{f} \xrightarrow{\MorAdel{\beta}} \ObjAdel{0}{}{c}{\zeta}{g} \xrightarrow{\MorAdel{\id_c}} \ObjAdel{b}{\beta}{c}{\zeta \cdot \kappa}{h}
    \]
    whose homology is given by the object
    \[
      \ObjAdel{b\oplus b}{\pmattwobythree{\beta}{\epsilon}{0}{0}{0}{\id_b}}{c \oplus f \oplus b}{\pmatthreebythree{\zeta}{0}{\id_c}{0}{\id_f}{0}{0}{0}{\beta}}{g \oplus f \oplus c}
    \]
    \item We have an isomorphism
    \[
      \cokernel\big( \CH( \MorAdel{\alpha}, \MorAdel{\beta} ) \xrightarrow{\CH( \MorAdel{\epsilon} ) } \CH( \MorAdel{\theta}, \MorAdel{\iota} ) \big)
      \simeq
      \ObjAdel{a \oplus b}{\pmattwobytwo{\alpha \cdot \epsilon}{0}{\epsilon}{\beta}}{f \oplus c}{ \pmattwobytwo{\iota}{0}{0}{\id_c}}{g \oplus c}
    \]
    \item We have a monomorphism from the homology object of step $(2)$ to the object in step $(3)$:
    \begin{center}
      \begin{tikzpicture}[label/.style={postaction={
              decorate,
              decoration={markings, mark=at position .5 with \node #1;}}}, baseline = (A),
              mylabel/.style={thick, draw=none, align=center, minimum width=0.5cm, minimum height=0.5cm,fill=white}]
            \coordinate (r) at (5,0);
            \coordinate (d) at (0,-2.5);
            
            \node (A) {$\big( b \oplus b$};
            \node (B) at ($(A) + (r)$) {$c \oplus f \oplus b$};
            \node (C) at ($(B) + (r)$) {$g \oplus f \oplus c \big)$};
            
            \node (E) at ($(A) + (d)$){$\big( a \oplus b$};
            \node (F) at ($(B) + (d)$) {$f \oplus c$};
            \node (G) at ($(C) + (d)$) {$g \oplus c \big)$};
            
            \draw[->,thick] (A) --node[above]{$\pmattwobythree{\beta}{\epsilon}{0}{0}{0}{\id_b}$} (B);
            \draw[->,thick] (B) --node[above]{$\pmatthreebythree{\zeta}{0}{\id_c}{0}{\id_f}{0}{0}{0}{\beta}$} (C);
            
            \draw[->,dashed,thick] (A) --node[left]{$\pmattwobytwo{0}{\id_b}{0}{\id_b}$} (E);
            \draw[->,thick] (B) --node[left]{$\pmatthreebytwo{0}{\id_c}{\id_f}{0}{\epsilon}{\beta}$} (F);
            \draw[->,dashed,thick] (C) --node[right]{$\pmatthreebytwo{-\id_g}{0}{\iota}{0}{\zeta}{\id_c}$} (G);

            \draw[->,thick] (E) --node[below]{$\pmattwobytwo{\alpha \cdot \epsilon}{0}{\epsilon}{\beta}$} (F);
            \draw[->,thick] (F) --node[below]{$\pmattwobytwo{\iota}{0}{0}{\id_c}$} (G);
            
      \end{tikzpicture}
    \end{center}
  \end{enumerate}
   
  For $(1)$, the claim follows from $\CH( \MorAdel{\beta}, \MorAdel{\zeta \cdot \kappa }) \simeq \CH( \MorAdel{\beta}, \MorAdel{\zeta \cdot \kappa } \cdot \eta)$.
  For $(2)$, first we factor $\MorAdel{\id_c}$ via the cokernel projection of $\MorAdel{\beta}$ and a uniquely determined morphism $\nu$, and second we take the kernel of $\nu$
  in order to obtain our desired homology object. For $(3)$, we identify $\CH( \MorAdel{\epsilon})$ with
  $
    \ObjAdel{a}{\alpha}{b}{\beta}{c} \xrightarrow[]{\MorAdel{\epsilon}} \ObjAdel{a}{\alpha \cdot \epsilon}{f}{\iota}{g}
  $
  and compute its cokernel.
  For $(4)$, we check if the identity morphism of the kernel object of the depicted morphism is zero,
  and a witness pair of that fact is explicitly given by the following diagram:
  \begin{center}
    \begin{tikzpicture}[label/.style={postaction={
            decorate,
            decoration={markings, mark=at position .5 with \node #1;}}}, baseline = (A),
            mylabel/.style={thick, draw=none, align=center, minimum width=0.5cm, minimum height=0.5cm,fill=white}]
          \coordinate (r) at (6.5,0);
          \coordinate (u) at (0,-4);
          
          \node (RB) {};
          \node (B) at ($(RB) + (r)$) {$c \oplus f \oplus b \oplus a \oplus b$};
          \node (CB) at ($(B) + (r)$) {$g \oplus f \oplus c \oplus f \oplus c$};
          
          \node (X) at ($(RB) + (u)$) {$b \oplus b \oplus a \oplus b$};
          \node (Y) at ($(X) + (r)$) {$c \oplus f \oplus b \oplus a \oplus b$};

          \draw[->,thick] (B) --node[above,yshift=0.4em]{
            $\begin{pmatrix}\zeta & 0 & \id_c & 0 & \id_c \\ 0 & \id_f & 0 & \id_f & 0 \\ 0 & 0 & \beta & \epsilon & \beta \\ 0 & 0 & 0 & \alpha \cdot \epsilon & 0 \\ 0 & 0 & 0 & \epsilon & \beta \end{pmatrix}$
            } (CB);
          
          \draw[->,thick] (X) --node[below,yshift=-0.4em]{
            $
            \begin{pmatrix} \beta & \epsilon & 0 & 0 & 0 \\ 0 & 0 & \id_b & 0 & 0 \\ 0 & 0 & 0 & \id_a & 0 \\ 0 & 0 & 0 & 0 & \id_b \end{pmatrix}
            $} (Y);

          \draw[->,thick] (B) --node[mylabel]{$\id$} (Y);
          
          \draw[->,thick, dashed, out = 180, in = 90] (B) to node[mylabel]
          {
            $
            \begin{pmatrix} 
            0 & 0 & 0 & 0 \\
            0 & 0 & 0 & 0 \\
            -\id_b & \id_b & 0 & 0 \\
            -\alpha & 0 & \id_a & 0 \\
            -\id_b & 0 & 0 & \id_b
            \end{pmatrix}
            $
          }
          (X);
          
          \draw[->,thick, dashed, out = -90, in = 0] (CB) to node[mylabel]
          {
            $
            \begin{pmatrix}
              0 & 0 & 0 & 0 & 0 \\
              0 & 0 & 0 & 0 & 0 \\
              0 & 0 & 0 & 0 & 0 \\
              0 & \id_f & 0 & 0 & 0 \\
              \id_c & 0 & 0 & 0 & 0 \\
            \end{pmatrix}
            $}
          (Y);
          
    \end{tikzpicture}
  \end{center}
\end{proof}

\begin{corollary}
  The refined $5$-lemma holds in every abelian category. In particular, the $5$-lemma holds in every abelian category.
\end{corollary}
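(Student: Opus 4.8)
The plan is to argue exactly as in the proof that the snake lemma holds in every abelian category, using Lemma~\ref{lemma:universal_instace_5lem} as the universal instance. Let $\BC$ be an abelian category and suppose given a commutative diagram as in the premise of Lemma~\ref{lemma:five_refined}, with rows $a \xrightarrow{\alpha} b \xrightarrow{\beta} c \xrightarrow{\gamma} d$ and $e \xrightarrow{\theta} f \xrightarrow{\iota} g \xrightarrow{\kappa} h$, verticals $\delta,\epsilon,\zeta,\eta$, all consecutive horizontal composites zero, $\delta$ an epimorphism and $\eta$ a monomorphism. As observed just before Lemma~\ref{lemma:universal_instace_5lem}, since $\delta$ is epic we may pick a morphism $\lambda\colon i \to a$ in $\BC$ with $\delta = \cokernel(\lambda)$, and since $\eta$ is monic we may pick $\mu\colon h \to j$ with $\eta = \kernel(\mu)$. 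Sending the nodes $i,a,b,c,f,g,h,j$ of the quiver $Q$ of Lemma~\ref{lemma:universal_instace_5lem} to the correspondingly named objects of $\BC$ and the arrows $\lambda,\alpha,\beta,\epsilon,\zeta,\iota,\kappa,\mu$ to the correspondingly named morphisms defines, on generators, an assignment that respects the relations $R$: the relations $\alpha\cdot\beta = 0$ and $\iota\cdot\kappa = 0$ are among the hypotheses on the diagram, and $\beta\cdot\zeta = \epsilon\cdot\iota$ is the commutativity of the middle square. As in the discussion preceding Lemma~\ref{lemma:universal_snake_lemma}, this extends (uniquely up to isomorphism) to an additive functor $F\colon \AsCat(Q,R)^{\oplus} \to \BC$.

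By the universal property of the Adelman category (Theorem~\ref{theorem:up_of_adel}) there is an exact functor $\widehat{F}\colon \Adel(\AsCat(Q,R)^{\oplus}) \to \BC$ with $\widehat{F}\circ\EmbAdelFunctor \simeq F$. I would then identify the image under $\widehat{F}$ of the commutative diagram produced by Lemma~\ref{lemma:universal_instace_5lem}, using the explicit description of $\widehat{F}$ in Remark~\ref{remark:explicit_up_induced_fun}: the embedded objects $\EmbAdel{x}$ map to $x$ for $x \in \{a,b,c,f,g,h\}$, the object $\ObjAdel{i}{\lambda}{a}{}{0}$ maps to $\cokernel(\lambda) \cong e$, and $\ObjAdel{0}{}{h}{\mu}{j}$ maps to $\kernel(\mu) \cong d$; on arrows, $\widehat{F}$ sends $\MorAdel{\alpha},\MorAdel{\beta},\MorAdel{\epsilon},\MorAdel{\zeta},\MorAdel{\iota},\MorAdel{\kappa}$ to (copies of) the like-named morphisms, it sends $\MorAdel{\id_a}$ to $\delta$ and $\MorAdel{\id_h}$ to $\eta$, it sends $\MorAdel{\alpha\cdot\epsilon}$ to the morphism $\cokernel(\lambda)\to f$ induced by $\alpha\cdot\epsilon$, and it sends $\MorAdel{\zeta\cdot\kappa}$ to the morphism $c\to\kernel(\mu)$ induced by $\zeta\cdot\kappa$. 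Thus $\widehat{F}$ transports the universal-instance diagram to the given premise diagram in $\BC$, and exactness of $\widehat{F}$ — preservation of kernels, cokernels, homology objects, monomorphisms and the zero object — then carries the conclusion of Lemma~\ref{lemma:universal_instace_5lem} to the monomorphism asserted by Lemma~\ref{lemma:five_refined} for our diagram. Finally, the classical $5$-lemma (Lemma~\ref{lemma:five}) is the special case already recorded at the end of Lemma~\ref{lemma:five_refined}: monicity of $\epsilon$ forces $\kernel(\epsilon) = 0$, exactness at $c$ forces $\CH(\beta,\gamma) = 0$ and exactness at $f$ forces $\CH(\theta,\iota) = 0$, so the monomorphism of Lemma~\ref{lemma:five_refined} has zero source and target, whence $\kernel(\zeta) = 0$ and $\zeta$ is monic.

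The step that is not purely formal, and hence the main obstacle, is the verification that $\widehat{F}(\MorAdel{\alpha\cdot\epsilon})$ and $\widehat{F}(\MorAdel{\zeta\cdot\kappa})$ are exactly the morphisms $\theta$ and $\gamma$ of the original diagram, rather than merely some morphisms with the correct source and target: one must use that $\delta\colon a \to \cokernel(\lambda)$ is the cokernel projection together with the commutativity $\delta\cdot\theta = \alpha\cdot\epsilon$ to pin down $\theta$, and dually that $\eta\colon\kernel(\mu)\to h$ is the kernel embedding together with $\gamma\cdot\eta = \zeta\cdot\kappa$ to pin down $\gamma$. This is the $5$-lemma counterpart of the corresponding identifications made in the snake-lemma case; once it is in place, the corollary is just transport of structure along the exact functor $\widehat{F}$.
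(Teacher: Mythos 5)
Your proposal is correct and takes essentially the same route as the paper: the paper states this corollary without proof, since it follows from the universal property of Adelman categories by exactly the transport-along-$\widehat{F}$ argument it spells out for the snake-lemma corollary, and your write-up is precisely that argument adapted to the quiver $(Q,R)$ of Lemma~\ref{lemma:universal_instace_5lem} (including the correct choices $\delta = \cokernel(\lambda)$, $\eta = \kernel(\mu)$ and the identification of $\widehat{F}(\MorAdel{\alpha\cdot\epsilon})$ and $\widehat{F}(\MorAdel{\zeta\cdot\kappa})$ with $\theta$ and $\gamma$ via the universal properties of cokernel and kernel).
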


\section{Conclusion and outlook}\label{section:outlook}

Adelman categories are amenable to 
a computer implementation\footnote{Such an implementation based on \CapPkg \cite{CAP-project} can be found on the github account of the 
author: \href{https://github.com/sebastianpos/Adelman.jl}{\url{https://github.com/sebastianpos/Adelman.jl}} }.
Thus, our methods allow for experimentation in the spirit of computer algebra.
The chosen examples (snake lemma, 5-lemma) of this paper serve as an illustration of our methods,
and we observe that our methods can shed some new light even on classical lemmata.
We could also have illustrated our methods by
direct computations in $\Adel( \AsCat( D_4 )^{\oplus} )$ that allow to solve the $3$ subspace problem and to recover Dedekind's free modular lattice on $3$ 
generators\footnote{This is nicely depicted in 
\href{https://blogs.ams.org/visualinsight/2016/01/01/free-modular-lattice-on-3-generators/}{\url{https://blogs.ams.org/visualinsight/2016/01/01/free-modular-lattice-on-3-generators/}}
}, where $D_4$ is given by the Dynkin quiver
\begin{center}
  \begin{tikzpicture}[label/.style={postaction={
       decorate,
       decoration={markings, mark=at position .5 with \node #1;}},
       mylabel/.style={thick, draw=none, align=center, minimum width=0.5cm, minimum height=0.5cm,fill=white}}]
       \coordinate (r) at (1,0);
       \coordinate (u) at (0,1);
       \node (A) {$\bullet$};
       \node (B) at ($(A)+(u)$) {$\bullet$};
       \node (C) at ($(A) + (r) - (u)$) {$\bullet$};
       \node (D) at ($(A) - (r) - (u)$) {$\bullet$};
       \draw[->,thick] (B) to (A);
       \draw[->,thick] (C) to (A);
       \draw[->,thick] (D) to (A);
 \end{tikzpicture}
\end{center}

Moreover, Adelman categories can serve as a formulation of homological algebra without the usage of elements and embedding theorems:
once an elementary proof of the universal property of the Adelman category is given, the proofs in this paper can be regarded as elementary
proofs of lemmata that classically are proven by chasing elements.

Last, we saw in Subsection \ref{subsection:five_lemma} that it is difficult to apply our methods directly to the proof of the 5-lemma.
Instead, we first needed to find a refinement of the premise and the conclusion.
A natural way to avoid this difficulty is given by Serre quotients:
let $Q$ denote the quiver given by 
\begin{center}
  \begin{tikzpicture}[label/.style={postaction={
          decorate,
          decoration={markings, mark=at position .5 with \node #1;}}}, baseline = (A),
          mylabel/.style={thick, draw=none, align=center, minimum width=0.5cm, minimum height=0.5cm,fill=white}]
        \coordinate (r) at (4,0);
        \coordinate (d) at (0,-1.5);
        
        \node (A) {$a$};
        \node (B) at ($(A) + (r)$) {$b$};
        \node (C) at ($(B) + (r)$) {$c$};
        \node (D) at ($(C) + (r)$) {$d$};
        
        \node (E) at ($(A) + (d)$){$e$};
        \node (F) at ($(B) + (d)$) {$f$};
        \node (G) at ($(C) + (d)$) {$g$};
        \node (H) at ($(D) + (d)$) {$h$};
        
        \draw[->,thick] (A) --node[above]{$\alpha$} (B);
        \draw[->,thick] (B) --node[above]{$\beta$} (C);
        \draw[->,thick] (C) --node[above]{$\gamma$} (D);
        
        \draw[->,thick] (A) --node[left]{$\delta$} (E);
        \draw[->,thick] (B) --node[left]{$\epsilon$} (F);
        \draw[->,thick] (C) --node[left]{$\zeta$} (G);
        \draw[->,thick] (D) --node[left]{$\eta$} (H);
        
        \draw[->,thick] (E) --node[above]{$\theta$} (F);
        \draw[->,thick] (F) --node[above]{$\iota$} (G);
        \draw[->,thick] (G) --node[above]{$\kappa$} (H);
        
  \end{tikzpicture}
\end{center}
and let $R$ be relations which encode that any two consecutive horizontal morphisms compose to zero,
and that the three rectangles commute.
Then, we may find our desired universal instance of the five lemma within the Serre quotient category
\[
  \Adel( \AsCat( Q, R)^{\oplus } )/ \CC
\]
where $\CC$ is the Serre subcategory spanned by the objects
\begin{itemize}
  \item $\CH( \beta, \gamma )$ (exactness at $c$),
  \item $\CH( \theta, \iota )$ (exactness at $f$),
  \item $\cokernel( \delta )$ ($\delta$ is epic),
  \item $\kernel( \epsilon )$ ($\epsilon$ is monic),
  \item $\kernel( \eta )$ ($\eta$ is monic).
\end{itemize}
It follows that proving the $5$-lemma means checking if $\zeta$ is monic in $\Adel( \AsCat( Q, R)^{\oplus } )/ \CC$,
which means checking $\kernel( \zeta ) \in \CC$.
Motivated by this example, a constructive treatment of such Serre quotient categories appears to be desirable.

\def\cprime{$'$} \def\cprime{$'$} \def\cprime{$'$} \def\cprime{$'$}
  \def\cprime{$'$}
\providecommand{\bysame}{\leavevmode\hbox to3em{\hrulefill}\thinspace}
\providecommand{\MR}{\relax\ifhmode\unskip\space\fi MR }
\providecommand{\MRhref}[2]{%
  \href{http://www.ams.org/mathscinet-getitem?mr=#1}{#2}
}
\providecommand{\href}[2]{#2}

\end{document}